\documentclass{amsart}

\usepackage{amsthm}
\usepackage{amsmath}
\usepackage{amstext}
\usepackage{amsfonts}
\usepackage{latexsym}
\usepackage{amscd}

\addtolength{\textheight}{0in}
\addtolength{\topmargin}{0in}
\addtolength{\textwidth}{0in}
\addtolength{\oddsidemargin}{0in}
\addtolength{\evensidemargin}{0in}

\theoremstyle{plain}
\newtheorem{theorem}{Theorem}[section]
\newtheorem{lemma}[theorem]{Lemma}
\newtheorem{prop}[theorem]{Proposition}
\newtheorem{corollary}[theorem]{Corollary}

\theoremstyle{definition}
\newtheorem{definition}[theorem]{Definition}

\newtheorem{remark}[theorem]{Remark}

\numberwithin{equation}{theorem}
 \usepackage{amssymb}

\usepackage[cmtip,all]{xy}

\newtheorem{conj}[theorem]{Conjecture}

\numberwithin{equation}{section}

\newcommand{\Z}{{\mathbb Z}}
\newcommand{\HH}{{\mathbb H}}
\newcommand\Hom{\mathrm {Hom}}
\newcommand\Aut{\mathrm{Aut}}
\newcommand{\Arond}{\mathcal{A}}
\newcommand{\Crond}{\mathcal{C}}
\newcommand{\Frond}{\mathcal{F}}
\newcommand{\Grond}{\mathcal{G}}
\newcommand{\Krond}{\mathcal{K}}
\newcommand{\Lrond}{\mathcal{L}}
\newcommand{\Orond}{\mathcal{O}}

\newcommand{\rhomp}{\ensuremath{\mathrm{Rhom}^{\cdot}}}
\newcommand{\rhomf}{\ensuremath{\mathrm{R}\mathcal{H}\mathrm{om}^{\cdot}}}

\newcommand\Af{\mathcal{A}{\rm ff}}
\newcommand\perf{{\rm Perf}}
\newcommand\aut{{\rm \Aut}}

\newcommand\daut{{{\rm Aut}^D}}
\newcommand\paut{{\aut \ltimes \pic }}
\newcommand\pic{{\rm Pic}}
\newcommand\pautz{{\aut^\circ \ltimes \pic^\circ}}
\newcommand\beq{\begin{equation}}
\newcommand\ee{\end{equation}}
\newcommand{\N}{{\mathbb N}}

\newcommand{\C}{{\mathbb C}}
\newcommand{\coh}{{\rm Coh}}

\begin{document}

\title{Some remarks on the group of derived autoequivalences}

\author{ROSAY Fabrice}
\address{ Universit\'e de Mons-Hainaut
Institut de Math\'ematiques,
B\^atiment Le Pentagone,
Avenue du Champ de Mars 6,
B-7000 Mons,
BELGIQUE}
\email{fabrice.rosay@umh.ac.be}
\thanks{}

\begin{abstract}
We prove that the neutral component of the group of derived autoequivalences of
a smooth projective variety is the semi-direct product of the neutral component
of its Picard group and its group of automorphisms. We use this result to prove
several results concerning pairs of derived equivalent varieties.
\end{abstract}

\maketitle

\section{Introduction}

After the seminal work of Mukai, Orlov, Kawamata and many other people after
them, it is now clear that for two algebraic varieties having equivalent derived
categories is a very interesting relation. We know that when it is the case, the
two algebraic varieties share many geometric properties, like their dimension,
the order of their canonical sheaf, their canonical ring, to name a few. It is
clear that the study of this relation is closely related to the study of  the
group of (tringulated) autoequivalences of the derived category of a variety.
Thanks to the work of To\"en and Vaqui\'e \cite{ToVa}, this group is in a
natural way an algebraic group scheme. In this paper we precise the structure of
this group. More accurately we determine its neutral component. We can sum up
everything in the following statement:

\begin{theorem}[see theorem \ref{principal}]
Let $k$ be an algebraicaly closed field and $X$ a smooth and projective
$k$-scheme. Let $D(X)$ be the bounded derived category of coherent sheaves on X.
Then the neutral component of the group of triangulated autoequivalence of
$D(X)$ is $\pautz(X)$.
(Where the notation $G^\circ$ stands for the neutral component of the group $G$)
\end{theorem}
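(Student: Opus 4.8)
The plan is to identify the group scheme of triangulated autoequivalences $\daut(X)$ with the automorphism group of a suitable geometric object and then compute its tangent space at the identity. The starting point is the theorem of To\"en and Vaqui\'e \cite{ToVa}: the derived autoequivalences of $D(X)$ are represented by a locally algebraic group scheme, which (by the Bondal--Orlov/Orlov description via Fourier--Mukai kernels) can be realized as an open subscheme of the moduli of objects on $X \times X$ whose associated integral transform is invertible. I would first set up this identification carefully, so that a point of $\daut(X)$ is a kernel $\Krond \in D(X\times X)$ and the group law is convolution of kernels. The obvious source of autoequivalences gives a homomorphism $\pic(X) \rtimes \aut(X) \to \daut(X)$, sending $(L,\varphi)$ to the functor $\varphi_* (- \otimes L)$, i.e. the kernel supported on the graph of $\varphi$ twisted by $L$; this map is a closed immersion of group schemes, so $\pautz(X)$ maps into $\daut(X)^\circ$. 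The heart of the matter is the reverse inclusion.

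For the reverse inclusion I would argue at the level of tangent spaces and then integrate. The tangent space to $\daut(X)$ at the identity is $\Hom_{D(X\times X)}(\od, \od[1]) \oplus (\text{deformations of }\od)$; more precisely, since deformations of the identity kernel $\od$ as an object of $D(X\times X)$ are governed by $\Ext^1_{X\times X}(\od,\od)$, and by Hochschild--Kostant--Rosenberg (or a direct computation using that $\od$ is the structure sheaf of the diagonal) one has
\[
\Ext^1_{X\times X}(\od,\od) \;\cong\; H^0(X,\mathcal{T}_X) \oplus H^1(X,\Orond_X).
\]
The first summand is the tangent space to $\aut(X)$ and the second is the tangent space to $\pic^\circ(X)$; thus the tangent space to $\daut(X)$ at the identity coincides with the tangent space to the image of $\pautz(X)$. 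Since $\daut(X)^\circ$ is smooth (it is a group scheme over a field, so it is smooth if $k$ has characteristic zero; in general one works with the reduced subscheme, or invokes that group schemes of finite type over a field become smooth after base change and the statement is about the reduced neutral component — I would follow whatever convention the paper fixed), and $\pautz(X) \hookrightarrow \daut(X)^\circ$ is a closed immersion inducing an isomorphism on tangent spaces at the identity, it is an open immersion; being also closed and $\daut(X)^\circ$ connected, it is an isomorphism.

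The main obstacle I anticipate is the tangent space computation, specifically showing that $\Ext^1_{X\times X}(\od,\od)$ has \emph{exactly} the two expected summands and no more — in particular that there is no contribution from $H^2(X,\Orond_X)$ or from higher pieces of the HKR decomposition landing in degree $1$. For a general smooth projective $X$ the HKR isomorphism reads $\Ext^i_{X\times X}(\od,\od) \cong \bigoplus_{p+q=i} H^q(X, \wedge^p \mathcal{T}_X)$, so in degree $1$ the only terms are $H^1(X,\Orond_X)$ and $H^0(X,\mathcal{T}_X)$, which is exactly what is needed; but I would need to check that the resulting identification is compatible with the group structure (so that these tangent directions really are tangent to the subgroups $\pic^\circ$ and $\aut$ and not to some exotic deformation) and that the argument does not secretly require characteristic zero. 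A secondary subtlety is handling positive characteristic, where group schemes need not be reduced: there I would either restrict attention to $k$-points and the reduced structure, or use that the neutral component is generated by its tangent directions together with the flows they generate, reducing again to the Lie algebra computation above. I would also need the fact, presumably established earlier or citable, that convolution of kernels is represented by an algebraic group structure in the sense that makes "tangent space of the autoequivalence group" equal "$\Ext^1$ of the identity kernel."
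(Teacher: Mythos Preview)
Your strategy is genuinely different from the paper's, and the difference matters.

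The paper does not compute the tangent space of $\daut_X$ and does not invoke HKR. Instead it proves directly that the monomorphism $\Psi:\paut\to\daut$ is \emph{formally smooth}, by checking the infinitesimal lifting criterion over square-zero extensions of local Artinian rings. The argument is geometric: if $\Frond^\cdot\in\daut(A')$ restricts over $A$ to a kernel of the form $\Gamma_{f*}\Lrond$, then by Bridgeland's lemma (a complex whose fibres are sheaves is itself a sheaf, flat over the base) $\Frond^\cdot$ is a sheaf flat over $A'$; its support is an infinitesimal thickening of the graph of $f$, hence again a graph; and its restriction to the support, being a flat deformation of a line bundle, is again a line bundle. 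Thus $\Frond^\cdot=\Psi(f',\Lrond')$. A formally smooth monomorphism of finite type is an open immersion, and an open subgroup of $\daut$ shares its neutral component.

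What this buys: the paper's argument is characteristic-free and needs neither smoothness of $\daut$ nor of $\pic^\circ$, both of which can fail when $\mathrm{char}\,k>0$. Your approach, by contrast, needs HKR in degree $1$ (which is fine if $\dim X<\mathrm{char}\,k$ or in characteristic zero, but not in general) and needs $\daut^\circ$ to be smooth to turn equality of tangent spaces into an open immersion. You correctly flag both issues, but they are not merely technicalities: without them your argument does not go through over an arbitrary algebraically closed field, which is the stated hypothesis. In characteristic zero your outline is essentially correct and gives a pleasant alternative; the compatibility you worry about (that the HKR summands really are the tangent directions to $\aut^\circ$ and $\pic^\circ$) is straightforward, and you do not actually need to know in advance that $\Psi$ is a closed immersion, since an \'etale monomorphism is already open and open subgroups are automatically closed.
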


The paper is organised as follows, in section 2 we recall some basic definition
about autoequivalences of derived categories and use this to explain a new
construction of the group $\daut$ of triangulated autoequivalences. Then we
prove the main theorem.\\
In section 3 we give several applications of the theorem \ref{principal}, with a
particular emphaze to  the case of abelian varieties.We close this section with
a reformulation of a conjecture of Kawamata on the cardinality  of classes of
derived equivalent algebraic variety.

\section{The algebraic group of derived autoequivalences}

Throughout this section we fix an algebraically closed field $k$ and a smooth
projective variety $X$ over $k$.
We denote by $D(X)$ the bounded derived category of coherent sheaves on $X$. Our
aim is to explain how to associate to $X$ a locally  algebraic
group scheme $\daut_X$ whose points are naturally identified with exact
autoequivalences of $D(X)$. This was first done by To\"en and Vaqui\'e in
\cite{ToVa} in the context of $\rm DG$-categories. Our construction is different
but leads to the same algebraic group.
Our new result is theorem \ref{principal} wich  describes the neutral component
of this group. Applications are discussed in section 3

\subsection{A quick review of integral transform}

Let $S$ be a $k$-scheme and $X\to S$ and $Y\to S$ be two smooth and projective
$S$-schemes.

\begin{definition}
To any object  $\Frond^\cdot \in D(X\times_S Y)$ we associate a functor, called
an integral transform, by the formula:
\beq \begin{array}{cccl}\phi^{\Frond^\cdot}_{X\to Y}:& D(X) &\to &
D(Y)\\
	 & F&\mapsto & Rp_{Y*}(\Frond^\cdot\otimes^L p_{X}^*(F))\end{array}\ee
\end{definition}

When this functor is an equivalence we call it a Fourier-Mukai transform. The
composite of two integral functors is again an integral functor. More precisely
let  $Z$ be a smooth and projective $S$-scheme and  $\Grond^\cdot \in
D(Y\times_S Z)$. Let $p_{X,Y}$ (resp $p_{Y,Z}$, $p_{X,Z}$) the
projection  of $X\times_S Y\times_S Z$ on $X\times_S Y$ (resp $Y \times_S Z$,
$X\times_S Z$). We define
\beq \Grond^\cdot*\Frond^\cdot=Rp_{X,Z*}(p_{X,Y}^*(\Frond^\cdot)\otimes^L
p_{Y,Z}^*(\Grond^\cdot))\ee

\begin{lemma}\label{composition}
The composite $\phi^{\Grond^\cdot}\circ \phi^{\Frond^\cdot}$ is an integral
transform. Its kernel is given by the following formula:

\beq \phi^{\Grond^\cdot}_{Y\to Z} \circ \phi^{\Frond^\cdot}_{X\to Y}\simeq
\phi^{\Grond^\cdot*\Frond^\cdot}_{X\to Z}\ee
\end{lemma}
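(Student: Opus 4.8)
The plan is to unwind both sides of the claimed isomorphism over the triple product $W:=X\times_S Y\times_S Z$ and then reduce the identity to three standard compatibilities of the derived functors $Rf_*$, $Lf^*$ and $\otimes^L$: flat base change, the projection formula, and the identity $R(g\circ f)_*\simeq Rg_*\circ Rf_*$. Fix notation: write $\pi^{XY}_X,\pi^{XY}_Y$ for the two projections of $X\times_S Y$, and likewise for $Y\times_S Z$ and $X\times_S Z$, and write $q_X,q_Y,q_Z$ for the projections of $W$ onto its three factors, so that $q_X=\pi^{XY}_X\circ p_{X,Y}=\pi^{XZ}_X\circ p_{X,Z}$ and similarly for $q_Y,q_Z$. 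All the functors occurring below preserve the bounded coherent derived category: the various projections are projective, so the relevant $Rf_*$ are bounded and coherence-preserving, and all the schemes in play are smooth, so $Lf^*$ and $\otimes^L$ have finite amplitude.

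First I would note that $W\cong(X\times_S Y)\times_Y(Y\times_S Z)$, so that $p_{X,Y},p_{Y,Z},\pi^{XY}_Y,\pi^{YZ}_Y$ form a cartesian square over $Y$ in which $\pi^{XY}_Y$ is flat (being a base change of $X\to S$). Flat base change then gives, for $F\in D(X)$,
$$(\pi^{YZ}_Y)^*\,R\pi^{XY}_{Y*}\big(\Frond^\cdot\otimes^L(\pi^{XY}_X)^*F\big)\;\simeq\;Rp_{Y,Z*}\big(p_{X,Y}^*\Frond^\cdot\otimes^L q_X^*F\big),$$
where I also used $p_{X,Y}^*\circ(\pi^{XY}_X)^*=q_X^*$. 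Substituting this into the definition of $\phi^{\Grond^\cdot}_{Y\to Z}\circ\phi^{\Frond^\cdot}_{X\to Y}$, applying the projection formula for $p_{Y,Z}$ to absorb $\Grond^\cdot$, and using $R\pi^{YZ}_{Z*}\circ Rp_{Y,Z*}=Rq_{Z*}$, I obtain
$$\phi^{\Grond^\cdot}_{Y\to Z}\big(\phi^{\Frond^\cdot}_{X\to Y}(F)\big)\;\simeq\;Rq_{Z*}\big(p_{X,Y}^*\Frond^\cdot\otimes^L p_{Y,Z}^*\Grond^\cdot\otimes^L q_X^*F\big).$$

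To finish I would factor $q_Z=\pi^{XZ}_Z\circ p_{X,Z}$ and $q_X=\pi^{XZ}_X\circ p_{X,Z}$, and use $Rq_{Z*}=R\pi^{XZ}_{Z*}\circ Rp_{X,Z*}$ together with the projection formula for $p_{X,Z}$, which turns the right-hand side above into $R\pi^{XZ}_{Z*}\big(Rp_{X,Z*}(p_{X,Y}^*\Frond^\cdot\otimes^L p_{Y,Z}^*\Grond^\cdot)\otimes^L(\pi^{XZ}_X)^*F\big)$. Since $Rp_{X,Z*}(p_{X,Y}^*\Frond^\cdot\otimes^L p_{Y,Z}^*\Grond^\cdot)=\Grond^\cdot*\Frond^\cdot$ by definition, this expression is exactly $\phi^{\Grond^\cdot*\Frond^\cdot}_{X\to Z}(F)$. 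Every isomorphism used is natural in $F$, so the chain assembles into an isomorphism of functors; in particular the composite $\phi^{\Grond^\cdot}\circ\phi^{\Frond^\cdot}$ is an integral transform with kernel $\Grond^\cdot*\Frond^\cdot$.

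The argument is essentially bookkeeping with the various projections, so the single point that really needs care is the base change step: one must make sure the square over $Y$ is cartesian with a flat (equivalently, Tor-independent) leg, so that $(\pi^{YZ}_Y)^*\,R\pi^{XY}_{Y*}\simeq Rp_{Y,Z*}\,p_{X,Y}^*$ holds at the derived level — this is precisely where flatness over $S$ enters. A lesser point, worth only a sentence, is checking that each functor preserves the bounded coherent derived category, which again uses properness of the projections and smoothness (finite Tor-dimension) of the schemes.
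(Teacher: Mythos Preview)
Your argument is correct and is exactly the standard proof; the paper itself does not prove this lemma but simply declares it well known and cites \cite{BoOr}, so there is nothing to compare beyond noting that what you wrote is the computation one finds in that reference (or in any Fourier--Mukai textbook).

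One small slip worth fixing: in the cartesian square over $Y$ you invoke flat base change and justify it by saying $\pi^{XY}_Y$ is flat, but the formula $(\pi^{YZ}_Y)^*R\pi^{XY}_{Y*}\simeq Rp_{Y,Z*}\,p_{X,Y}^*$ requires flatness of the \emph{base-changing} leg $\pi^{YZ}_Y$, not of the pushforward leg. This is harmless here since $\pi^{YZ}_Y$ is the base change of $Z\to S$ and hence also flat (indeed, all the projections in sight are smooth, so the square is Tor-independent anyway), but you should point to the correct leg.
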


This is well known (e.g. see \cite{BoOr}). \\
An integral functor always have a right adjoint.

\begin{lemma}\label{adjoint}
With notation as above the integral functor with kernel $\Frond^\cdot$ has a
right adjoint, the integral transform with kernel
$$\Frond^{\cdot-1}=\rhomf(\Frond^\cdot,\mathcal{O}_{X\times_S Y})
\otimes^L L{p_{X}}^*(\omega_{X/S}[n])$$ where $\omega_{X/S}$ stands for the
relative canonical sheaf of $X$ over $S$.
\end{lemma}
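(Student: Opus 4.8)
The plan is to decompose $\phi^{\Frond^\cdot}_{X\to Y}$ into three elementary functors and take the right adjoint of each factor. Explicitly,
$$\phi^{\Frond^\cdot}_{X\to Y}\ \simeq\ Rp_{Y*}\ \circ\ \bigl(-\otimes^L\Frond^\cdot\bigr)\ \circ\ Lp_X^*,$$
where $Lp_X^*\colon D(X)\to D(X\times_S Y)$, the tensor functor $-\otimes^L\Frond^\cdot\colon D(X\times_S Y)\to D(X\times_S Y)$, and $Rp_{Y*}\colon D(X\times_S Y)\to D(Y)$. Each of these has a well-known right adjoint: the right adjoint of $Lp_X^*$ is $Rp_{X*}$; the right adjoint of $-\otimes^L\Frond^\cdot$ is $\rhomf(\Frond^\cdot,-)$ by the tensor--Hom adjunction; and the right adjoint of $Rp_{Y*}$ is the twisted pullback $p_Y^!$ furnished by Grothendieck--Serre duality, since $p_Y$ is smooth and projective. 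First I would record these three adjunctions, noting that all functors in sight preserve the bounded coherent derived category because $X$, $Y$ and $X\times_S Y$ are smooth and $p_X,p_Y$ are proper; hence $\phi^{\Frond^\cdot}_{X\to Y}$ admits a right adjoint, namely the composite $Rp_{X*}\circ\rhomf(\Frond^\cdot,-)\circ p_Y^!$, taken in the reverse order.

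The next step is to make $p_Y^!$ explicit. Since $p_Y\colon X\times_S Y\to Y$ is the base change of $X\to S$ along $Y\to S$, it is smooth of relative dimension $n=\dim X$, and its relative dualizing complex is $\omega_{p_Y}[n]$ with $\omega_{p_Y}\simeq Lp_X^*\omega_{X/S}$. Consequently
$$p_Y^!(G)\ \simeq\ Lp_Y^*(G)\otimes^L Lp_X^*\bigl(\omega_{X/S}[n]\bigr),$$
and substituting this into the composite right adjoint gives the functor $G\mapsto Rp_{X*}\bigl(\rhomf(\Frond^\cdot,\ Lp_Y^*(G)\otimes^L Lp_X^*(\omega_{X/S}[n]))\bigr)$.

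It remains to rewrite the kernel. Because $X\times_S Y$ is smooth, $\Frond^\cdot$ is a perfect complex, so $\rhomf(\Frond^\cdot,-)\simeq\rhomf(\Frond^\cdot,\mathcal{O}_{X\times_S Y})\otimes^L-$; in particular $\rhomf(\Frond^\cdot,\mathcal{M}\otimes^L\mathcal{N})\simeq\rhomf(\Frond^\cdot,\mathcal{N})\otimes^L\mathcal{M}$ for all $\mathcal{M},\mathcal{N}$. Applying this with $\mathcal{M}=Lp_Y^*(G)$ and $\mathcal{N}=Lp_X^*(\omega_{X/S}[n])$, and regrouping the tensor factors, the right adjoint becomes
$$G\ \longmapsto\ Rp_{X*}\Bigl(\bigl(\rhomf(\Frond^\cdot,\mathcal{O}_{X\times_S Y})\otimes^L Lp_X^*(\omega_{X/S}[n])\bigr)\otimes^L Lp_Y^*(G)\Bigr)\ =\ Rp_{X*}\bigl(\Frond^{\cdot-1}\otimes^L Lp_Y^*(G)\bigr),$$
which is precisely $\phi^{\Frond^{\cdot-1}}_{Y\to X}$. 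The one genuinely delicate input is Grothendieck--Serre duality: identifying the right adjoint of $Rp_{Y*}$ and, in particular, checking via flat base change that the relative dualizing complex of the projection $p_Y$ is $Lp_X^*\omega_{X/S}[n]$; once that is granted, the rest is formal manipulation of perfect complexes, derived tensor products and $\rhomf$.
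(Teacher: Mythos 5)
Your argument is correct and is exactly the standard one; the paper itself gives no proof of this lemma, treating it as well known (it is the relative version of the Mukai/Bondal--Orlov adjoint-kernel formula), so there is nothing in the text to compare against beyond the citation. Your decomposition $\phi^{\Frond^\cdot}_{X\to Y}\simeq Rp_{Y*}\circ(-\otimes^L\Frond^\cdot)\circ Lp_X^*$, the three adjunctions $Lp_X^*\dashv Rp_{X*}$, $(-\otimes^L\Frond^\cdot)\dashv\rhomf(\Frond^\cdot,-)$, $Rp_{Y*}\dashv p_Y^!$, and the identifications $p_Y^!(-)\simeq Lp_Y^*(-)\otimes^L Lp_X^*(\omega_{X/S})[n]$ (flat base change of the relative dualizing complex along $Y\to S$) are all as they should be, and the regrouping yields precisely the kernel $\Frond^{\cdot-1}$ of the statement. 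Two small points deserve tightening. First, $n$ must be the relative dimension of $X$ over $S$, not $\dim X$ in the absolute sense (these agree when $S=\Spec k$). Second, your justification that $\rhomf(\Frond^\cdot,-)\simeq\rhomf(\Frond^\cdot,\Orond_{X\times_S Y})\otimes^L(-)$ ``because $X\times_S Y$ is smooth'' is too quick in the relative setting: smoothness of $X\times_S Y$ over $S$ does not make every bounded coherent complex perfect unless $S$ itself is regular (in the paper $S$ ranges over arbitrary affine, e.g.\ artinian non-reduced, $k$-schemes). Either assume $\Frond^\cdot$ perfect --- which is harmless, since every kernel the paper feeds into this lemma is perfect by hypothesis --- or restrict to $S=\Spec k$; without finite Tor-dimension of $\Frond^\cdot$ the displayed dual kernel need not compute the right adjoint. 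With that hypothesis made explicit, the proof is complete.
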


Finally we recall  that  an integral transform commutes with a base change

Let $h:T \to S$ a morphisme of schemes. With evident notations we have the
commutative diagramme:
\beq \xymatrix{ & X_T\times_T  Y_T \ar[dr]^{p_{Y_T}} \ar[dl]^{p_{X_T}}
\ar[dd]^{h_{X\times_S Y}} &\\
X_T \ar[dd]^{h_X}& &Y_T \ar[dd]^{h_Y}\\
&X\times_S Y\ar[rd]^{p_Y} \ar[dl]^{p_X}& \\
X& &Y}\ee

Let $\Frond^\cdot_T=Lh_{X\times_S Y}^*(\Frond^\cdot)$.

\begin{lemma}\label{basechangefm}
The functors $Lh_Y^* \circ \phi_{\Frond^\cdot}$ and $\phi_{\Frond^\cdot_T} \circ
Lh^*_X$ are isomorphic
\end{lemma}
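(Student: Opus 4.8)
The plan is to chase the cube of the lemma and assemble the desired natural isomorphism out of three standard ingredients: derived base change along a Tor-independent Cartesian square, the fact that derived pullback is symmetric monoidal (so commutes with $\otimes^L$), and the compatibility of derived pullback with composition of morphisms. Fixing $F\in D(X)$ and unwinding the two definitions, what must be produced is an isomorphism
\[
Lh_Y^*\,Rp_{Y*}\bigl(\Frond^\cdot\otimes^L Lp_X^*F\bigr)\;\simeq\;Rp_{Y_T*}\bigl(\Frond^\cdot_T\otimes^L Lp_{X_T}^*Lh_X^*F\bigr),
\]
natural in $F$; the right-hand side is by definition $\phi_{\Frond^\cdot_T}(Lh_X^*F)$.

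First I would isolate from the cube the square with top arrow $h_{X\times_S Y}$, bottom arrow $h_Y$, and vertical arrows $p_{Y_T}$ and $p_Y$; one checks it is Cartesian since $X_T\times_T Y_T\simeq(X\times_S Y)\times_S T\simeq(X\times_S Y)\times_Y Y_T$. As $X\to S$ is smooth, hence flat, its base change $p_Y\colon X\times_S Y\to Y$ is flat, so the square is Tor-independent; $p_Y$ being moreover proper, derived base change yields $Lh_Y^*\circ Rp_{Y*}\simeq Rp_{Y_T*}\circ Lh_{X\times_S Y}^*$ as functors on $D(X\times_S Y)$. Applying this to $\Frond^\cdot\otimes^L Lp_X^*F$ rewrites the left-hand side as $Rp_{Y_T*}\,Lh_{X\times_S Y}^*\bigl(\Frond^\cdot\otimes^L Lp_X^*F\bigr)$.

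Next, since $Lh_{X\times_S Y}^*$ commutes with $\otimes^L$, the argument of $Rp_{Y_T*}$ splits as $Lh_{X\times_S Y}^*\Frond^\cdot\otimes^L Lh_{X\times_S Y}^*Lp_X^*F$; the first factor is $\Frond^\cdot_T$ by definition. For the second factor, commutativity of the cube gives $p_X\circ h_{X\times_S Y}=h_X\circ p_{X_T}$, whence $Lh_{X\times_S Y}^*\circ Lp_X^*\simeq L(p_X\circ h_{X\times_S Y})^*\simeq L(h_X\circ p_{X_T})^*\simeq Lp_{X_T}^*\circ Lh_X^*$. Substituting these two identifications turns the argument into $\Frond^\cdot_T\otimes^L Lp_{X_T}^*Lh_X^*F$, so the whole expression becomes $\phi_{\Frond^\cdot_T}(Lh_X^*F)$, as wanted. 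Tracking naturality through each of the three isomorphisms (all natural in their inputs) shows the composite is natural in $F$, i.e. an isomorphism of functors.

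The only step that requires real care is the first one, namely verifying that the derived base-change isomorphism genuinely applies: this is precisely the flatness of $X\to S$ (equivalently of $p_Y$) together with the properness of $p_Y$, both of which hold because $X$ and $Y$ are smooth and projective over $S$; alternatively one may simply invoke the base-change statement for Fourier--Mukai kernels from the standard references. Everything else is formal manipulation with $Lf^*$, $\otimes^L$ and $Rf_*$.
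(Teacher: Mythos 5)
Your argument is correct and is essentially the computation the paper has in mind: it only remarks that the lemma follows by a ``straightforward computation using projection and base change formulae,'' and you supply exactly that check, with derived base change along the Tor-independent (indeed flat) proper square carrying the real content. The only cosmetic difference is that you replace the projection formula by monoidality of $Lh_{X\times_S Y}^*$ together with compatibility of derived pullbacks with composition, which is an equivalent way to organize the same formal manipulation.
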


The proof is a straightforward computation using projection and base change
formulae.

\subsection{The algebraic group of derived autoequivalences}

We now construct a presheaf on the site $\Af/k$ of affine $k$-schemes:
\beq \begin{array}{ccl} \Af/k & \to & \rm{Ens}\\
			A& \mapsto & \{\text{perfect complexes }\Frond^\cdot\in
D_{qc}(X\times X\times A)| \Frond^\cdot* \Frond^{\cdot-1}=
\Frond^{\cdot-1}*\Frond^\cdot=\Orond_{\Delta} \}
     \end{array}
\ee
Let's recall that a perfect complex on a scheme $X$ is one locally isomorphic
(in the derived category) to a bounded complex of locally free sheaves of finite
rank and that as usual $\Orond_{\Delta}$ stands for the structure sheaf of the
diagonal embeding $X \hookrightarrow X\times X$.
\begin{theorem}\label{algebraicgroup}
The sheaf $\daut$ associated to the presheaf above (for the etale topology) is
an algebraic space locally of finite presentation. Moreover we can endow it with
the composition law $*$ of \ref{composition}, making it into  a group object in
the category of algebraic spaces over $k$, hence a locally algebraic group
scheme.
\end{theorem}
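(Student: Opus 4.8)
The plan is to realize $\daut$ as an open-and-closed subobject of a moduli space of objects in $D(X\times X)$ whose representability is already known, and then to identify the composition law $*$ as an algebraic group law. First I would invoke the standard representability result for the derived moduli stack (or moduli space) of perfect complexes on a smooth projective variety: for $X\times X$ smooth and projective over $k$, there is an algebraic space $\mathcal{M}$, locally of finite presentation over $k$, whose $A$-points are the perfect complexes in $D_{qc}(X\times X\times A)$ (up to the appropriate equivalence), by the work of Lieblich on the stack of perfect complexes, or equivalently by To\"en--Vaqui\'e's $\mathcal{M}_X$. The presheaf defining $\daut$ is the subfunctor of $\mathcal{M}$ cut out by the two conditions $\Frond^\cdot * \Frond^{\cdot-1} = \Orond_\Delta$ and $\Frond^{\cdot-1} * \Frond^\cdot = \Orond_\Delta$, where by Lemma \ref{adjoint} the complex $\Frond^{\cdot-1}$ is a functorially-defined perfect complex on $X\times X\times A$ (it is the kernel of the right adjoint), and $*$ is the relative convolution of Lemma \ref{composition} performed over the base $A$, which by Lemma \ref{basechangefm} is compatible with base change and hence defines a morphism of functors $\mathcal{M}\times\mathcal{M}\to\mathcal{M}$.

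The key step is to show that these two conditions are \emph{open and closed} on $\mathcal{M}$, so that $\daut$ is an open-and-closed algebraic subspace, automatically locally of finite presentation. For this I would argue fibrewise: being a Fourier--Mukai equivalence is detected by the induced functor on derived categories being an equivalence, and over an algebraically closed field a morphism of kernels is invertible (i.e. $\Frond^\cdot * \Frond^{\cdot-1}\simeq\Orond_\Delta$) if and only if the corresponding integral functor is fully faithful and essentially surjective; by a theorem of Bondal--Orlov / Bridgeland this is an open condition, and since the locus where $\Frond^\cdot*\Frond^{\cdot-1}$ fails to be $\Orond_\Delta$ is also closed (the cohomology sheaves of the convolution jump in a closed fashion and $\Orond_\Delta$ is rigid), the equivalence locus is a union of connected components. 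Concretely, one can use that for a perfect complex $\mathcal{K}$ on $X\times X\times A$ the condition "$\mathcal{K}$ restricted to each fibre over $A$ is isomorphic to $\Orond_\Delta$" is, by semicontinuity and the rigidity/deformation theory of $\Orond_\Delta$ (its only self-Homs in degree $\le 0$ being scalars, and $\Ext^{<0}=0$), an open and closed condition on $\Spec A$; apply this to $\mathcal{K}=\Frond^\cdot*\Frond^{\cdot-1}$ and to $\mathcal{K}=\Frond^{\cdot-1}*\Frond^\cdot$.

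Finally, I would check the group-object axioms. The multiplication is $*$, the unit section is the $k$-point $\Orond_\Delta$, and the inverse is $\Frond^\cdot\mapsto\Frond^{\cdot-1}$; associativity of $*$ and the unit axioms follow from Lemma \ref{composition} (associativity of convolution and $\Orond_\Delta$ being a two-sided identity for $*$), while the inverse axioms are exactly the defining conditions of $\daut$. All of these are equalities of morphisms of algebraic spaces, so it suffices to check them on $A$-points functorially, where they are the statements of the lemmas in Section 2.1 applied relatively over $A$ (valid by the base-change Lemma \ref{basechangefm}). A group object in algebraic spaces that is locally of finite presentation over $k$ is, by a theorem of Artin (every group algebraic space over a field is a scheme), a locally algebraic group scheme, which gives the last assertion. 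The main obstacle I anticipate is the open-and-closedness step: one must be careful that "FM equivalence" is genuinely open \emph{and} closed in families, which requires combining Bridgeland's openness criterion with the rigidity of $\Orond_\Delta$ to get closedness, and one must make sure the convolution $\Frond^\cdot*\Frond^{\cdot-1}$ stays perfect in families (it does, since proper pushforward of a perfect complex along a smooth projective morphism is perfect).
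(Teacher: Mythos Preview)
Your overall architecture matches the paper's: embed $\daut$ as an open subfunctor of a known moduli space of complexes on $X\times X$, verify the group axioms for $*$, and finish with Artin's theorem that a group algebraic space over a field is a scheme. But there is a genuine gap in your first step. You assert that there is an \emph{algebraic space} $\mathcal{M}$ whose $A$-points are all perfect complexes on $X\times X\times A$; no such algebraic space exists. Lieblich's result only gives an Artin stack for \emph{universally gluable} complexes, and an algebraic space only after further restricting to \emph{simple} ones (this is exactly the space $\perf_{X\times X}$ in the paper). For a general perfect complex the negative self-Exts produce higher automorphisms, so the moduli object is a higher stack, not a $1$-stack, let alone a space. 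Consequently your ``$\daut$ is open-and-closed in $\mathcal{M}$, hence an algebraic space'' does not go through as written.

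The missing ingredient, which the paper supplies, is to show \emph{first} that any invertible kernel $\Frond^\cdot$ is automatically simple and universally gluable. The paper does this by base-changing the Fourier--Mukai transform to $X$ and computing $\phi^{\Frond^\cdot_X}(\Orond_\Delta)=\Frond^\cdot$, whence $\Hom^i(\Frond^\cdot,\Frond^\cdot)\cong\Hom^i(\Orond_\Delta,\Orond_\Delta)$, which is $k$ in degree $0$ and $0$ in negative degrees. Only after this does one get a monomorphism $j:\daut_X\hookrightarrow\perf_{X\times X}$ into an honest algebraic space; the paper then shows $j$ is \emph{formally smooth}, hence an open immersion, and concludes. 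Note that openness alone suffices; your additional claim of closedness (``cohomology sheaves jump in a closed fashion and $\Orond_\Delta$ is rigid'') is both unnecessary for the theorem and not adequately justified---rigidity of $\Orond_\Delta$ gives local constancy, i.e.\ openness, but not closedness in a space that is merely locally of finite type.
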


We don't give a proof of this theorem here, instead we just explain the (simple)
idea of the proof. The reader interested in a proof can consult either my thesis
\cite{these}, or the paper \cite{ToVa} where this result is proven (in a quite
different context). Before we go on we need a last result due to Inaba,
Liebliech and To\"en and Vaqui\'e(see \cite{In}, \cite{Li}, \cite{ToVa}).
Let $\Frond^\cdot$ be a perfect complex in $D_{qc}(X\times S)$. For any point
closed point $s$ of $S$ we denote by $i_s$ the closed immersion $s
\hookrightarrow S$ and $X_s$ the fiber.

\begin{definition}
We say that $\Frond^\cdot$ is simple iff for any closed point $s$ of $S$ the
complex $Li^*_s(\Frond^\cdot)$ is simple, that is:
$$\Hom(Li^*_s(\Frond^\cdot),Li^*_s(\Frond^\cdot))=k$$
We say that $\Frond^\cdot$ is universally gluable (rigid in the terminology of
\cite{ToVa}) iff:
$$\rhomp(Li^*_s(\Frond^\cdot),Li^*_s(\Frond^\cdot)) \text{ is concentrated in
degrees greater or equal to 0}$$
\end{definition}

\begin{theorem}(\cite{In},\cite{Li}, \cite{ToVa})
The etale sheaf associated to the presheaf of perfect, simple, universally
gluable complexes is an algebraic space locally of finite presentation. 
We denote it $\perf_X$
\end{theorem}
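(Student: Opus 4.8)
The plan is to verify Artin's representability criterion, but it is cleanest to produce first an algebraic \emph{stack} of complexes and then descend to an algebraic space by a rigidification. So the first step would be to introduce the fibered category $\mathcal{D}$ over $\Af/k$ sending $A$ to the groupoid of perfect complexes on $X\times\Spec A$ that are perfect relative to $A$, with morphisms the isomorphisms in the derived category. That $\mathcal{D}$ is a stack for the \'etale (indeed fppf) topology is simply descent for perfect complexes, which is classical; the real content is Artin's criterion for $\mathcal{D}$ and its substacks.

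Concretely one must check several axioms. First, $\mathcal{D}$ is limit preserving: this follows from the standard approximation statements for perfect complexes on $X\times\Spec A$ and lets one reduce everything to noetherian $A$. Second, the diagonal is representable by algebraic spaces, which amounts to representability of the $\Hom$-sheaves between two perfect complexes on $X\times\Spec A$, a consequence of cohomology and base change. Third, and most importantly, the deformation and obstruction theory is governed by coherent sheaves: for a perfect complex $E$ on a fiber $X_s$ over a closed point $s$, deformations along a square-zero extension by an ideal $I$ form a torsor under $\Ext^1_{X_s}(E,E)\otimes_k I$, with obstruction in $\Ext^2_{X_s}(E,E)\otimes_k I$; since $X$ is smooth and projective and $E$ is perfect these groups are finite dimensional, and by base change and Illusie's theory of the cotangent complex (equivalently, the tangent complex $\rhomp(E,E)[1]$ of the moduli problem) they assemble into coherent sheaves over the base.

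The role of the universal gluability hypothesis, $\rhomp(Li_s^*\Frond^\cdot,Li_s^*\Frond^\cdot)$ concentrated in non-negative degrees, is precisely to guarantee that the above is a genuine obstruction theory in the $1$-categorical sense: the vanishing of the negative self-$\Ext$'s means the tangent complex lives in degrees $\ge -1$, so there is no hidden derived or higher-stacky structure, and one concludes that the locus $\mathcal{D}^{ug}\subset\mathcal{D}$ of universally gluable complexes is open and is an Artin stack locally of finite presentation (this is the substance of Lieblich's work). Inside it, the open locus $\mathcal{D}^{spl,ug}$ of complexes that are moreover simple has the property that the automorphism group of every object is exactly the scalars $\mathbb{G}_m$, that is, $\mathcal{D}^{spl,ug}$ is a $\mathbb{G}_m$-gerbe over an algebraic space; applying the rigidification procedure (Abramovich-Corti-Vistoli, Romagny) to kill this $\mathbb{G}_m$ produces the algebraic space $\perf_X$, which inherits local finite presentation. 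It then remains to match the functor of points of $\perf_X$ with the presheaf in the statement, which is a comparison of \'etale sheafifications on noetherian affine test schemes.

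The hard part will be the effectivity of formal deformations: one needs that a compatible system of deformations of $E$ over the infinitesimal thickenings $\Spec(R/\mathfrak{m}^n)$ of a complete local ring algebraizes to a deformation over $\Spec\hat R$. This requires a Grothendieck existence theorem valid for perfect complexes, not merely for coherent sheaves, and one must also check that the gluability and simplicity conditions are preserved under passage to the completed situation. This is exactly where the arguments of Inaba and of Lieblich do the genuine work, and why the theorem is attributed to them; the remaining ingredients are descent, the standard cotangent-complex deformation theory recalled above, and bookkeeping with Artin's axioms.
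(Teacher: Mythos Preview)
The paper does not prove this theorem at all: it is stated as a result of Inaba, Lieblich, and To\"en--Vaqui\'e, cited via the references \cite{In}, \cite{Li}, \cite{ToVa}, and then used as a black box in the idea-of-proof of Theorem~\ref{algebraicgroup}. So there is no ``paper's own proof'' to compare against; the author explicitly defers to the literature.

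Your outline is, in broad strokes, a faithful summary of Lieblich's approach (Artin's axioms for the stack of relatively perfect complexes, with universal gluability guaranteeing an honest $1$-Artin stack and simplicity allowing $\mathbb{G}_m$-rigidification to an algebraic space), and you correctly flag effectivity of formal deformations --- Grothendieck existence for perfect complexes --- as the substantive analytic input. As a reconstruction of what lies behind the citation this is reasonable, but be aware that it is not what the present paper does: here the theorem is simply quoted, and the paper's own contribution begins only with the subsequent argument that $\daut_X$ embeds as an open subspace of $\perf_{X\times X}$.
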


\begin{proof}(Idea of the proof of theorem \ref{algebraicgroup})
Let $\Frond^\cdot\in D_{qc}(X\times X)$ a perfect complex such that
$\Frond^\cdot* \Frond^{\cdot-1}= \Frond^{\cdot-1}*\Frond^\cdot=\Orond_{\Delta}$.
Then keeping in mind
that the integral transform associated to $\mathcal{O}_{\Delta}$ is the identity
it follows that $\phi^{\Frond^\cdot}$ is an equivalence. The same argument works
if we base change to $X$ as in the lemma \ref{basechangefm}. Then a direct
computation gives:

$$\phi^{\Frond_X^\cdot}(\mathcal{O}_{\Delta})=\Frond^\cdot$$
So that
$$\Hom^{i}(\Frond^\cdot,\Frond^\cdot)=\Hom^{i}(\mathcal{O}_{\Delta},\mathcal{O}_
{\Delta}).$$

The right member of this last equation is $0$ for $i<0$ and $k$ for $i=0$. So
that we have morphism $j:\daut_X \to \perf_{X\times X}$, induced by an obvious
monomorphism at the level of the presheaves defining these two sheaves.
As sheafification is left exact it follows that the morphism $j$ is a
monomorphism. The idea of the proof is then to show that it is formaly smooth.
It implies that $j$ is an open immersion, thus proving the representability of
$\daut(X)$. To end the proof we appeal to a result of Artin to the effect that a
group object in the category of algebraic spaces over a field is a group scheme
(\cite{ArAn}, lemma 4.2).
\end{proof}

\begin{remark}
\begin{itemize}
\item By a result of Orlov \cite{OrK3}, it is straight forward to check that the
$k$-points of $\daut_X$ are in one to one correspondence with the exact
equivalence of $D(X)$.
\item By a deep result in \cite{ToVa}, this algebraic group has countably many
connected components, we will come back on this latter.
\end{itemize}
\end{remark}

We now come to our main result.

\subsection{The neutral component of $\daut_X$}
Let $\aut$ be the group scheme of  $k$-automorphisms of $X$ and $\pic$ the
Picard scheme of $X$. Let $S \to k$ be an affine scheme. Let's consider
an invertbile sheaf $\Lrond$ on $X_S$ and an $S$-automorphism $f$ of $X_S$. To
this data we asscociate
a sheaf  $\Frond^\cdot=\Psi_S(f,\Lrond)$ on $X\times X \times S$ in the
following maner. Identifying $X\times X \times S$ with the scheme $X_S \times _S
X_S$, we denote by
$\Gamma_f$ the section of $X_S\times _S X_S \overset{p_1}{\to}  X_S$ induced by
$f^{-1}$, we then define
\beq \Frond^\cdot=\Psi_S(f,\Lrond)=\Gamma_{f*}(\Lrond) \ee
It's clear that $\Frond^\cdot$ is perfect  ($X$ is smooth). A straightforward
computation
then shows that the integral transform associated to  $\phi^{\Frond^\cdot}$ is
nothing but  $Rf^{-1}_*(\Lrond\otimes^L())$ which is an equivalence.\\
If  $(f_1,\Lrond_1)$ and $(f_2,\Lrond_2)$  are as above, the integral transform
associated with
 $\Psi_S(f_1,\Lrond_1)*\Psi_S(f_2,\Lrond_2)$ is
$Rf_{2*}^{-1}(Rf_{1*}^{-1}(\Lrond_1)\otimes^L\Lrond_2)$; by the projection
formula we thus have
\beq \Psi_S(f_1,\Lrond_1)*\Psi_S(f_2,\Lrond_2)=\Psi_S(f_1\circ
f_2,\Lrond_1\otimes f^{-1*}_1(\Lrond_2))\ee

So that $\Psi_S$ induce a group morphisme between the semi-direct product of the
 picard group by the group of  $S$-automorphisms (the former acting on the first
by  $f \mapsto f^{-1*}$) on the one side, and the group of invertible integral
transform on the other side. As $\Psi_S$ is injective we can sum up everything
in the

\begin{prop}
The group morphisms $\Psi_S$ induce a  sheaf monomorphism
\beq \Psi:\aut \ltimes \pic \to \daut \ee
which is a group morphism.
\end{prop}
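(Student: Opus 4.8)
The plan is to check that each of the separate assertions in the proposition---that $\Psi$ is well-defined on the sheaf level, that it is a monomorphism, and that it is a group morphism---follows from what has already been established pointwise by $\Psi_S$. First I would observe that the data of an invertible sheaf $\Lrond$ on $X_S$ together with an $S$-automorphism $f$ is precisely a point of $(\aut\ltimes\pic)(S)$ for affine $S$, since $\pic(S)=\pic(X/k)(S)$ parametrizes invertible sheaves on $X_S$ (up to pullback from $S$, which one kills by the usual rigidification) and $\aut(S)$ parametrizes $S$-automorphisms of $X_S$. The construction $\Psi_S(f,\Lrond)=\Gamma_{f*}(\Lrond)$ produces a perfect complex on $X\times X\times S$, and the computation recalled just before the statement shows its associated integral transform is $Rf^{-1}_*(\Lrond\otimes^L(-))$, an equivalence; hence $\Psi_S(f,\Lrond)$ lies in the presheaf defining $\daut$. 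So we get a map of presheaves on $\Af/k$, and after sheafifying for the etale topology a map $\Psi:\aut\ltimes\pic\to\daut$. One should check this map of presheaves is natural in $S$, i.e. compatible with base change $T\to S$: this is exactly lemma \ref{basechangefm} applied to the kernel $\Gamma_{f*}(\Lrond)$, together with the fact that $\Gamma_f$ and $\Lrond$ both pull back compatibly, so $\Psi_S$ commutes with restriction maps.

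Next I would verify the group-morphism property. The displayed identity
\beq \Psi_S(f_1,\Lrond_1)*\Psi_S(f_2,\Lrond_2)=\Psi_S(f_1\circ f_2,\Lrond_1\otimes f^{-1*}_1(\Lrond_2))\ee
is precisely the statement that $\Psi_S$ is a group homomorphism from $(\pic\rtimes\aut)(S)$ with the indicated semidirect-product structure---$\aut$ acting on $\pic$ by $f\mapsto f^{-1*}$---to the group of invertible integral transforms under $*$. This identity is obtained from lemma \ref{composition} (computing $\Grond^\cdot*\Frond^\cdot$ for the two graph kernels) together with the projection formula, as indicated in the text; I would just carry out that push-pull computation along the graph of $f_1$. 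Since this holds for every affine $S$ functorially, it holds after sheafification, so $\Psi$ is a morphism of group sheaves.

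For injectivity: $\Psi_S$ is injective on sections because one can recover $(f,\Lrond)$ from the kernel $\Gamma_{f*}(\Lrond)$---the support of the complex is the graph $\Gamma_f\subset X_S\times_S X_S$, which determines $f^{-1}$ hence $f$, and then $\Lrond$ is recovered as $\Gamma_f^*\Psi_S(f,\Lrond)$ (or equivalently by applying the transform to $\Orond_\Delta$ after the appropriate twist). Thus $\Psi$ is a monomorphism of presheaves, and since sheafification is left exact (it preserves monomorphisms), $\Psi:\aut\ltimes\pic\to\daut$ is a monomorphism of sheaves, hence a closed-or-just-mono map of the associated group spaces.

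The main obstacle I expect is not any single computation---each of the three push-pull identities is routine given lemmas \ref{composition}--\ref{basechangefm}---but rather the bookkeeping needed to make ``the group of invertible integral transforms'' match the sheaf $\daut$ at the presheaf level: one must confirm that $\Psi_S(f,\Lrond)$ actually satisfies the defining condition $\Frond^\cdot*\Frond^{\cdot-1}=\Frond^{\cdot-1}*\Frond^\cdot=\Orond_\Delta$ on the nose (not merely that its transform is an equivalence), which requires identifying the right adjoint kernel of lemma \ref{adjoint} for $\Frond^\cdot=\Gamma_{f*}(\Lrond)$ and checking the two compositions collapse to the diagonal. For a pushforward along a graph this is a clean Grothendieck--Serre duality computation---$\Gamma_{f*}(\Lrond)^{\cdot-1}\simeq\Gamma_{f*}(\Lrond^{-1})$ up to the canonical-sheaf twist that cancels because $f$ is an automorphism---but it is the one place where one genuinely uses smoothness and properness rather than just formal nonsense. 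Once that is in hand, naturality and the homomorphism identity give the proposition.
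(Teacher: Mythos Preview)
Your proposal is correct and follows essentially the same approach as the paper: the paper does not give a separate proof of this proposition but presents it as a summary of the preceding discussion (the definition of $\Psi_S$, the computation that the associated transform is an equivalence, the composition formula, and the remark that $\Psi_S$ is injective), and you are simply spelling out those same steps with more care about sheafification, naturality, and the defining condition of $\daut$. The extra verification you flag at the end---that $\Psi_S(f,\Lrond)$ satisfies $\Frond^\cdot*\Frond^{\cdot-1}=\Orond_\Delta$ on the nose---is a genuine detail the paper leaves implicit, but your sketch of how to handle it via Grothendieck--Serre duality for the graph is the right one.
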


\begin{prop}
The morphism $\Psi$ is open.
\end{prop}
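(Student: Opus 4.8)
The plan is to show that the monomorphism $\Psi:\aut \ltimes \pic \to \daut$ is an open immersion by checking that it is formally smooth; since $\daut$ is locally of finite presentation, formal smoothness together with the fact that $\Psi$ is a monomorphism will force it to be an open immersion. Concretely, one fixes an affine $k$-scheme $S$ with a square-zero extension $S \hookrightarrow S'$ (with ideal $I$), a map $S' \to \daut$ corresponding to a perfect complex $\Frond'^\cdot$ on $X\times X\times S'$ defining a family of Fourier--Mukai equivalences, together with a lift over $S$ of the form $\Psi_S(f,\Lrond)$, and one must produce a compatible lift of $(f,\Lrond)$ to $S'$. The key point is that the restriction of $\Frond'^\cdot$ to each closed fiber is of the form $\Gamma_{g*}\Mrond$ for an automorphism $g$ of $X$ and a line bundle $\Mrond$ — this is exactly the classical result (Orlov, Huybrechts) that a Fourier--Mukai equivalence which is "geometric" in the appropriate sense, i.e. whose kernel has the numerical/cohomological profile forced by lying in the image of $\Psi$ on a dense set of points, must have a kernel supported on a graph.

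The key steps, in order, would be: (1) reduce to the infinitesimal lifting problem just described, using Artin's criterion / the fact that $\daut$ is locally of finite presentation so openness of a monomorphism is equivalent to formal smoothness; (2) observe that the condition "the kernel is, fiberwise, of the form $\Gamma_{g*}\Mrond$" is an open condition on $\perf_{X\times X}$, hence on $\daut$ via the open immersion $j$ of Theorem \ref{algebraicgroup}, so it suffices to check that the image of $\Psi$ lands in — and in fact surjects onto — this open locus; (3) for the lifting, note that an $S$-point of the image of $\Psi$ carries the data of a graph subscheme $\Gamma_f \subset X_S\times_S X_S$ together with a line bundle on it; deforming the complex $\Frond'^\cdot$ over $S'$, one first deforms the support (the graph of an automorphism, whose deformations are governed by $\aut$ being smooth over $k$ — or one invokes unobstructedness of deformations of a section of a smooth projective morphism), then deforms the line bundle on it (governed by $\pic$), and checks these deformations assemble to a perfect complex isomorphic to $\Psi_{S'}(f',\Lrond')$ with the prescribed restriction; (4) conclude that $j\circ\Psi$ is formally smooth and a monomorphism, hence an open immersion, hence $\Psi$ itself is open.

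The main obstacle, I expect, is step (3): showing that a first-order deformation of the \emph{complex} $\Frond^\cdot = \Gamma_{f*}\Lrond$ (as a perfect complex on $X\times X\times S$, i.e. as a point of $\daut$) is necessarily again of the form $\Gamma_{f'*}\Lrond'$, rather than some genuinely new perfect complex that merely happens to restrict to a graph on the special fiber. This requires comparing the deformation theory of $\Frond^\cdot$ as an object of $D(X\times X)$ — controlled by $\Ext^*_{X\times X}(\Frond^\cdot,\Frond^\cdot)$ — with the deformation theory of the pair $(f,\Lrond)$, controlled by $\Ext^*_X(\Orond_\Delta, \Orond_\Delta)$ after transport of structure by the equivalence, i.e. by the Hochschild cohomology $HH^*(X)$; the self-$\Ext$ computation $\Hom^i(\Frond^\cdot,\Frond^\cdot)=\Hom^i(\Orond_\Delta,\Orond_\Delta)$ already appeared in the proof of Theorem \ref{algebraicgroup}, and the content here is that the resulting identification of (co)tangent and obstruction spaces matches, under $\Psi$, the tangent/obstruction spaces of $\aut\ltimes\pic$, namely $H^0(X,T_X)\oplus H^1(X,\Orond_X)$ (with obstructions in $H^1(X,T_X)\oplus H^2(X,\Orond_X)$), via the Hochschild--Kostant--Rosenberg-type decomposition. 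Once the tangent map of $\Psi$ is identified with this natural inclusion and shown surjective onto the tangent space of the relevant open locus of $\daut$, formal smoothness follows and the rest is formal.
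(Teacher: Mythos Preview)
Your overall strategy matches the paper's exactly: reduce openness of the monomorphism $\Psi$ to formal smoothness, and verify the infinitesimal lifting criterion over a square-zero extension $A'\twoheadrightarrow A$, starting from $(f,\Lrond)\in(\paut)(A)$ and $\Frond^\cdot\in\daut(A')$ with $\Frond^\cdot\otimes^L_{A'}A=\Psi(f,\Lrond)$. The difference is entirely in how you handle what you correctly flag as the crux, step~(3).

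The paper bypasses all the Hochschild/HKR machinery you propose. The key input is Bridgeland's lemma (stated in the paper as Lemma~\ref{torfini}): if a complex on a family restricts to a \emph{sheaf} on every fibre, then it is itself a sheaf, flat over the base. Since $\Psi(f,\Lrond)=\Gamma_{f*}\Lrond$ is a sheaf on the closed fibre (indeed on every fibre over $\Spec A$), this lemma forces $\Frond^\cdot$ to be an honest sheaf on $X_{A'}\times_{A'}X_{A'}$, flat over $A'$. After that, everything is elementary: the support $Z_{A'}$ of $\Frond^\cdot$ maps to $X_{A'}$ via $p_1$, and this map is an isomorphism because its reduction mod the nilpotent ideal is one; restricted to $Z_{A'}\simeq X_{A'}$ the sheaf $\Frond^\cdot$ is a flat deformation of the line bundle $\Lrond$, hence itself a line bundle $\Lrond'$; and the resulting $f':X_{A'}\to X_{A'}$ is an automorphism because it lifts $f$. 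Thus $\Frond^\cdot=\Psi(f',\Lrond')$ on the nose.

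So your proposed route through $\Ext^*_{X\times X}(\Frond^\cdot,\Frond^\cdot)\simeq HH^*(X)$ and an HKR-type identification of tangent and obstruction spaces is not wrong in spirit, but it is substantially heavier than needed and leaves real work undone (making the obstruction comparison precise, and passing from tangent-level surjectivity to genuine formal smoothness). The single observation you are missing is that Bridgeland's lemma immediately upgrades ``complex whose special fibre is a sheaf on a graph'' to ``sheaf, flat over the thickened base,'' after which the problem collapses to classical deformation of a closed subscheme and a line bundle on it.
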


\begin{proof}
As $\Psi$ is a monomorphism between schemes of finite type over a field, it's
enough to check that $\Psi$ is formally smooth. 

So that we have to prove that if
\begin{itemize}
\item $A$ is a local artinian ring with maximal ideal $\mathfrak{m}$
\item $A' \to A$ is an infinitesimal extension, i.e the kernel $I$ is a square
zero principal ideal
\item $(f,\Lrond)\in \paut(A)$ and $\Frond^\cdot \in \daut(A')$ are such that
$\Frond^\cdot\otimes^L_{A'} A=\Psi(f,\Lrond)$
\end{itemize}
then there exist $(f',\Lrond')\in \paut(A')$ such that
$\Frond^\cdot=\Psi(f',\Lrond')$.\\
Its clear that  $\Psi(f,\Lrond)$ is a sheaf flat over  $A$. Let $Z_A$ s
be its support. Lemma \ref{torfini} implies that  $\Frond^\cdot$ is in fact
a sheaf flat over $A'$. Let  $Z_{A'}$  be the support of $\Frond^\cdot$. The
projection $p_1:X_{A'}
\times _{A'} X_{A'} \to X_{A'}$ induce a  morphism $Z_{A'} \to X_{A'}$ whose
reduction
to $A$ is $p_1:Z_A \to X_A$. Hence $p_1:Z_{A'}
\to X_{A'}$ is also an isomorphism.  on its support $\Frond$ is a deformation of
$\Lrond$ which is locally free of rank  $1$. As $\Frond$ is flat over $A'$, it
follows that  $\Frond$ restricted to its support is also free of rank $1$.  To
sum up there exist a morphism $f'$ and an invertible sheaf $\Lrond'$ such that
$\Frond^\cdot=\Psi(f',\Lrond')$ with the slight abuse that we don't know yet
that $f'$ is an automorphism. But it's clear as  $f'$ lifts $f$.
\end{proof}

\begin{lemma}\label{torfini}(\cite{Breq},lemma 4.3)
Let $\pi:S\to T$ be a morphism of schemes, and for each
point $t\in T$, let $i_t:S_t\to S$ denote the inclusion of the fibre
$\pi^{-1}(t)$. Let $\Frond^\cdot$
be an object of $D(S)$, such that for all $t\in T$, $L i_t^*(\Frond^\cdot)$ is
a sheaf on $S_t$. Then $\Frond^\cdot$ is a sheaf on $S$, flat over $T$.
\end{lemma}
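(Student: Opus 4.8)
The plan is to establish the two halves of the conclusion together --- that $\Frond^\cdot$ is concentrated in cohomological degree $0$, and that the resulting sheaf is $T$-flat --- by peeling cohomology off from the top. The assertion is local on $S$, and in the situations where the lemma is used $S$ and $T$ are noetherian, $\Frond^\cdot$ is a perfect (in particular bounded, coherent) complex, and the structural morphism $\pi$ is flat (it is, up to a base change, a projection $X\times X\times S'\to S'$), so I will freely identify $Li_t^*(-)$ with the derived fibre $(-)\otimes^{L}_{\mathcal{O}_T}\kappa(t)$.

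I would first bound the amplitude from above. Fix $x\in S$ with $t=\pi(x)$, and set $n=\sup(\Frond^\cdot_x)$. Derived tensoring with $\kappa(t)$ over $\mathcal{O}_{T,t}$ does not raise cohomological degrees, and in the top degree $n$ it computes $\mathcal{H}^n(\Frond^\cdot_x)\otimes_{\mathcal{O}_{T,t}}\kappa(t)=\mathcal{H}^n(\Frond^\cdot_x)/\mathfrak{m}_t\mathcal{H}^n(\Frond^\cdot_x)$; since $\mathfrak{m}_t\mathcal{O}_{S,x}\subseteq\mathfrak{m}_x$, Nakayama over the local ring $\mathcal{O}_{S,x}$ shows this is nonzero whenever $\mathcal{H}^n(\Frond^\cdot_x)\neq 0$. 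As the hypothesis forces $Li_t^*\Frond^\cdot$ to live in degree $0$, we conclude $\sup(\Frond^\cdot_x)\le 0$ for every $x$, that is, $\Frond^\cdot\in D^{\le 0}(S)$. The same top-degree observation yields a vanishing criterion that I will reuse: if $G\in D^{\le c}(S)$ satisfies $Li_t^*G=0$ for all $t$, then $G=0$ (otherwise its top nonzero cohomology sheaf, localised at a point of its support, would survive passage to the derived fibre).

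Next I would prove that the top cohomology sheaf $\mathcal{H}^0(\Frond^\cdot)$ is flat over $T$. Apply the triangulated functor $Li_t^*$ to the truncation triangle $\tau^{\le -1}\Frond^\cdot\to\Frond^\cdot\to\mathcal{H}^0(\Frond^\cdot)\to(\tau^{\le -1}\Frond^\cdot)[1]$ and examine the long exact cohomology sequence. The term $\mathcal{H}^{-1}(Li_t^*\mathcal{H}^0(\Frond^\cdot))$ is squeezed between $\mathcal{H}^{-1}(Li_t^*\Frond^\cdot)=0$ (the middle term is a sheaf in degree $0$) and $\mathcal{H}^{0}(Li_t^*\tau^{\le -1}\Frond^\cdot)=0$ (right $t$-exactness of $Li_t^*$, applied to $\tau^{\le -1}\Frond^\cdot\in D^{\le -1}$). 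Hence $\mathcal{H}^0(\Frond^\cdot)$ has vanishing $\mathrm{Tor}_1^{\mathcal{O}_T}$ against every residue field $\kappa(t)$, and the local criterion for flatness gives that $\mathcal{H}^0(\Frond^\cdot)$ is flat over $T$.

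Finally I would kill the lower truncation. Flatness of $\mathcal{H}^0(\Frond^\cdot)$ makes $Li_t^*\mathcal{H}^0(\Frond^\cdot)$ a sheaf in degree $0$; feeding this back into the same long exact sequence, and using that $Li_t^*\Frond^\cdot$ is also a sheaf in degree $0$, forces $Li_t^*\tau^{\le -1}\Frond^\cdot$ to have zero cohomology in every degree, i.e.\ $Li_t^*\tau^{\le -1}\Frond^\cdot=0$ for all $t$. Since $\tau^{\le -1}\Frond^\cdot\in D^{\le -1}(S)$, the vanishing criterion of the first step gives $\tau^{\le -1}\Frond^\cdot=0$, whence $\Frond^\cdot\simeq\mathcal{H}^0(\Frond^\cdot)$ is a sheaf, flat over $T$. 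The one genuinely delicate point is exactly this passage from ``$Li_t^*\Frond^\cdot$ is a sheaf'' to ``$\Frond^\cdot$ is a sheaf'': one cannot bound the amplitude of $\Frond^\cdot$ from below by a naive argument, because derived pullback along $\pi$ can a priori create cohomology in arbitrarily negative degrees --- the remedy is to single out $\mathcal{H}^0(\Frond^\cdot)$ first, prove its flatness through the vanishing of $\mathrm{Tor}_1$, and only then deduce that everything underneath it disappears.
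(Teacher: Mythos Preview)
The paper does not supply its own proof of this lemma: it is quoted verbatim from Bridgeland \cite{Breq}, lemma~4.3, and invoked as a black box in the proof of the preceding proposition. Your argument is correct and is essentially the standard one (the same ideas appear in Bridgeland's original and in Huybrechts' Fourier--Mukai book): bound the amplitude from above via Nakayama on the top cohomology, use the truncation triangle to extract $\mathrm{Tor}_1$-vanishing for $\mathcal{H}^0(\Frond^\cdot)$ and hence $T$-flatness by the local criterion, then feed that flatness back in to kill $\tau^{\le -1}\Frond^\cdot$ fibrewise and hence globally. Your explicit acknowledgement of the auxiliary hypotheses (noetherian, $\Frond^\cdot$ bounded with coherent cohomology, $\pi$ flat so that $Li_t^*$ agrees with $-\otimes^L_{\mathcal{O}_T}\kappa(t)$) is appropriate, since these are indeed present in the application and are needed for Nakayama and the local flatness criterion; the lemma as literally stated in the paper is missing them.
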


As a corollary we get the following theorem summing up all the previous  
discussion

\begin{theorem}\label{principal}
There is an exact sequence of locally algebraic group $$0 \to \pautz \to \daut
\to G \to 0.$$
The group $G$ has at most countably many elements.
The neutral component of $\daut$ is $\pautz$.
\end{theorem}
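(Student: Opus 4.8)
The plan is to assemble the theorem from the two preceding propositions together with the structural results on $\daut$ recalled in the previous subsection. First I would invoke the proposition asserting that $\Psi:\aut\ltimes\pic\to\daut$ is a monomorphism of sheaves and a group morphism, and the proposition asserting that $\Psi$ is open; combined with the fact (Theorem \ref{algebraicgroup}) that $\daut$ is a locally algebraic group scheme, an open monomorphism of group schemes is automatically an open and closed immersion, so $\Psi$ identifies $\aut\ltimes\pic$ with an open-and-closed subgroup scheme of $\daut$. Taking neutral components, the image of $(\aut\ltimes\pic)^\circ = \pautz$ under $\Psi$ is open in $\daut$ and contains the identity, hence coincides with $\daut^\circ$. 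This already gives the last sentence of the statement, and it produces the short exact sequence $0\to\pautz\to\daut\to G\to 0$ with $G:=\daut/\pautz$, which as a quotient of a locally algebraic group scheme by its open-and-closed neutral component is an \'etale (hence \'etale discrete) group scheme over $k$, i.e.\ a group.

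For the countability of $G$ I would appeal to the deep result of To\"en and Vaqui\'e quoted in the remark after Theorem \ref{algebraicgroup}, namely that $\daut_X$ has at most countably many connected components. Since $\pautz$ is connected and open, $G=\pi_0(\daut)$ is precisely the set of connected components of $\daut$, so $G$ has at most countably many elements. Alternatively, if one wishes to avoid quoting that result, one can argue directly: every autoequivalence $\Phi$ of $D(X)$ acts on the numerical Grothendieck group / on cohomology respecting the Euler pairing and the canonical class, and the group of such lattice isometries is countable; the kernel of the action of $\pi_0(\daut)$ on this lattice lands in $\pautz$ by the classification of autoequivalences acting trivially on cohomology, so $G$ injects into a countable group. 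I would present the first (shorter) route as the main argument and mention the second only as a remark.

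The only genuine subtlety — and the step I expect to be the main obstacle to write cleanly — is the passage from ``$\Psi$ is an open monomorphism'' to ``$\Psi$ is an open immersion of group schemes, hence identifies the source with an open-and-closed subgroup.'' This requires knowing that $\Psi$, being a monomorphism of algebraic spaces locally of finite type that is also formally smooth (as established in the proof of openness), is an open immersion; then one uses that in a group scheme any open subgroup is closed (its complement being a union of cosets, each open). I would spell this out carefully, since the whole exact sequence and the identification of the neutral component rest on it. The remaining verifications — that the sequence is exact in the category of locally algebraic group schemes, and that $G$ inherits a group structure — are then formal, given that the quotient of a group scheme by an open-and-closed normal subgroup scheme exists and is \'etale.
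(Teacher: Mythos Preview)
Your main argument is correct and is exactly the route the paper takes: from the two propositions one gets that $\Psi$ realizes $\aut\ltimes\pic$ as an open subgroup of $\daut$, hence the two share the same neutral component, which is $\pautz$; the countability of $G=\daut/\pautz=\pi_0(\daut)$ is then quoted from To\"en--Vaqui\'e. The paper's proof is in fact terser than yours --- it does not spell out the ``open monomorphism $\Rightarrow$ open immersion $\Rightarrow$ open-and-closed subgroup'' chain --- so your extra care there is welcome but not a departure in strategy.

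One warning about your proposed alternative route to countability: the claim that the kernel of the action of $\pi_0(\daut)$ on the numerical Grothendieck group (or cohomology) lands in $\pautz$ is \emph{not} known in general and is in fact false. Already for $K3$ surfaces there are autoequivalences acting trivially on cohomology (e.g.\ squares of spherical twists) that do not lie in $\aut\ltimes\pic$, let alone in its neutral component; determining this kernel is a hard open problem. So that alternative argument does not go through, and you should drop it rather than present it as a remark.
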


\begin{remark}
The existence of $\daut$ and the fact that there are countably connected
components in it are already proven in  \cite{ToVa}.
\end{remark}

\begin{proof}
As  $\paut$ is an open subgroup of $\daut$, they have the same neutral
component, and it's clear that the neutral component of $\paut$ is $\pautz$.  At
this point the only non trivial point remaining is the countability assertion
which results from \cite[corollaries 3.31 and 3.32]{ToVa}).
\end{proof}

In the next section we give several applications of the theorem \ref{principal}

\section{applications}

 \subsection{The group $\pautz$ is a derived invariant}

\begin{theorem}\label{theo1} Let $X$ and $Y$ be two smooth projective varieties
over an algebraically closed field $k$. Suppose we have an equivalence of
triangulated categories : $D^b(\coh(X))\overset{\phi}{\simeq} D^b(\coh(Y))$.
Then $$\aut^\circ_X\ltimes \pic^\circ_X \simeq \aut^\circ_Y \ltimes
\pic^\circ_Y$$
\end{theorem}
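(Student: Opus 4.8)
The plan is to show that an equivalence $\phi: D^b(\coh(X)) \simeq D^b(\coh(Y))$ induces an isomorphism of the algebraic groups $\daut_X \simeq \daut_Y$, and then invoke Theorem \ref{principal} to identify the neutral components. The key point is that conjugation by $\phi$ sends autoequivalences of $D(X)$ to autoequivalences of $D(Y)$ at the level of $k$-points in an obvious way, but one must upgrade this to a morphism of the group schemes, not merely of their point sets. For this I would use the description of $\daut$ as a moduli space of perfect, simple, universally gluable complexes on $X \times X$ (respectively $Y \times Y$): the equivalence $\phi$ is itself a Fourier--Mukai transform with some kernel $\Prond^\cdot \in D^b(X \times Y)$ (by Orlov's theorem), with quasi-inverse given by the kernel $\Prond^{\cdot -1}$, and conjugation $\Frond^\cdot \mapsto \Prond^\cdot * \Frond^\cdot * \Prond^{\cdot -1}$ is given by a composition of integral transforms, hence by Lemma \ref{composition} is again an integral transform whose kernel depends algebraically on $\Frond^\cdot$. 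This should extend to families: base-changing to an arbitrary affine $S$ via Lemma \ref{basechangefm}, the assignment $\Frond^\cdot \mapsto \Prond^\cdot_S * \Frond^\cdot * \Prond^{\cdot -1}_S$ defines a morphism of presheaves, hence of the associated sheaves, and it is compatible with the composition law $*$, so it is a morphism of group schemes $\daut_X \to \daut_Y$; the inverse conjugation gives the two-sided inverse, so it is an isomorphism.

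Once the isomorphism $\daut_X \simeq \daut_Y$ of locally algebraic group schemes is established, an isomorphism of group schemes carries neutral component to neutral component, so $(\daut_X)^\circ \simeq (\daut_Y)^\circ$. By Theorem \ref{principal}, $(\daut_X)^\circ = \pautz(X) = \aut^\circ_X \ltimes \pic^\circ_X$ and similarly for $Y$, which gives exactly the claimed isomorphism $\aut^\circ_X \ltimes \pic^\circ_X \simeq \aut^\circ_Y \ltimes \pic^\circ_Y$. Note that it is the semidirect product as a whole that is a derived invariant; neither factor need be individually preserved, which is why the statement is phrased the way it is.

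The main obstacle is the functoriality in families: checking that conjugation by the Fourier--Mukai kernel $\Prond^\cdot$ genuinely defines a morphism of the moduli functors rather than just a bijection on $k$-points. One has to verify that if $\Frond^\cdot$ is a perfect complex on $X \times X \times S$ with $\Frond^\cdot * \Frond^{\cdot -1} = \Frond^{\cdot -1} * \Frond^\cdot = \Orond_\Delta$, then its conjugate $\Prond^\cdot_S * \Frond^\cdot * \Prond^{\cdot -1}_S$ (formed over $S$) is again perfect and satisfies the analogous invertibility on $Y \times Y \times S$ — this is the relative version of the computation in the proof of Theorem \ref{algebraicgroup}, and it follows from associativity of $*$ together with the fact that $\Prond^\cdot * \Prond^{\cdot -1} = \Orond_{\Delta_X}$ and $\Prond^{\cdot -1} * \Prond^\cdot = \Orond_{\Delta_Y}$, all of which are stable under the base change of Lemma \ref{basechangefm}. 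The only subtlety is perfectness of the kernel over a general base, but since $X$ and $Y$ are smooth and $\Prond^\cdot$ is fixed (pulled back from $X \times Y$), this is routine. Granting this, everything reduces to formal manipulations with integral transforms already recorded in Section 2.
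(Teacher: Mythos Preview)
Your argument is correct and follows essentially the same route as the paper: invoke Orlov's theorem to obtain a Fourier--Mukai kernel for $\phi$, use conjugation by this kernel (and its inverse) to define, functorially in $S$, an isomorphism of group schemes $\daut_X \simeq \daut_Y$, and then apply Theorem~\ref{principal} to identify the neutral components. The paper's proof is terser---it simply asserts that $\Crond^\cdot \mapsto \Grond^\cdot * \Crond^\cdot * \Frond^\cdot$ is an isomorphism---whereas you spell out the base-change and perfectness verifications more carefully, but the strategy is identical.
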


\begin{proof}
From \cite[theorem 2.2]{OrK3} it follows that the equivalence is  given by a
Fourier-Mukai transform. Let $\Frond^\cdot$ be its kernel and $\Grond^\cdot$ the
kernel of the inverse transform. One then defines a morphism $\daut_X \to
\daut_Y$  by sending $\Crond^\cdot \in \daut_X(S)$ to 
$\Grond^\cdot*\Crond^\cdot*
\Frond^\cdot$. It's clear that this morphism is an isomorphism. So that $\daut_X
\simeq \daut _Y$, in particular their neutral component are isomorphic. Now use
the fact from theorem \ref{principal} that the neutral components respectively
are $\aut^\circ_X\ltimes \pic^\circ_X$ and $\aut^\circ_Y\ltimes \pic^\circ_Y$.
\end{proof}

This fact was already announced by Rouquier in \cite{Ro}, where the author does
not give a proof.
In the same paper the following theorem is conjectured.

\begin{theorem}\label{theo2} Let $A$ be an abelian variety over $k$
an algebraically closed field of characteristic $0$. Suppose we have an
equivalence of triangulated categories
$D^b(\coh(X))\overset{\phi}\simeq D^b(\coh(A))$, Then $X$ is also an abelian
variety.
\end{theorem}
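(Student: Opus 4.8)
The strategy is to combine the derived invariance of the group $\pautz$ from Theorem \ref{theo1} with the known characterization of abelian varieties among all smooth projective varieties in terms of the action of this group. Since $A$ is an abelian variety, translations give a subgroup of $\aut^\circ_A$ isomorphic to $A$ itself acting freely and transitively on $A$; in particular $\dim \aut^\circ_A = \dim A$ and the orbit of any point is all of $A$. By Theorem \ref{theo1}, the equivalence $\phi$ induces an isomorphism $\aut^\circ_X \ltimes \pic^\circ_X \simeq \aut^\circ_A \ltimes \pic^\circ_A$. The first step is to isolate, intrinsically inside $\pautz$, the abelian part: one observes that $\pic^\circ$ is always an abelian variety, and that for $A$ an abelian variety $\aut^\circ_A \simeq A$ is also an abelian variety, so that $\pautz(A)$ is an \emph{abelian variety} of dimension $2\dim A$. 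Hence $\pautz(X)$ is an abelian variety of dimension $2\dim X$ (dimensions agree since derived equivalent varieties have the same dimension).

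The second step is to exploit that $\aut^\circ_X$ is then itself an abelian variety (being a connected subgroup-quotient compatible with the semidirect product structure — more precisely, since $\pic^\circ_X$ is a normal abelian subvariety and the quotient $\aut^\circ_X$ embeds as the neutral component of the automorphism group, a connected algebraic group all of whose such pieces are proper, it is proper, hence an abelian variety). A connected algebraic group acting on $X$ that happens to be an abelian variety acts through translations on its orbits, and each orbit is an abelian subvariety of dimension $\dim \aut^\circ_X = \dim X$; therefore $\aut^\circ_X$ acts transitively on $X$ with finite (hence, being connected, trivial) stabilizer, so $X \simeq \aut^\circ_X$ as a variety, making $X$ an abelian variety. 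To run this one needs the comparison $\dim\aut^\circ_X + \dim\pic^\circ_X = \dim\aut^\circ_A + \dim\pic^\circ_A = 2\dim A$ together with the inequalities $\dim\pic^\circ_X \ge \dim X$ forced by $X$ admitting a transitive-up-to-finite-index action; in characteristic $0$ one actually has that for any smooth projective $X$, $\dim\pic^\circ_X = \dim H^1(X,\Orond_X) = h^{0,1}$ and $\dim\aut^\circ_X = \dim H^0(X,T_X)$, and the Hodge-theoretic inputs coming from a Fourier-Mukai equivalence with an abelian variety pin these down.

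Concretely, I would argue as follows. A Fourier-Mukai equivalence preserves Hochschild homology, hence $\sum_p h^{p,q+p}(X)$ matches that of $A$ for each fixed $q-p$; in particular the Hodge numbers are tightly constrained, and since $A$ is abelian, $h^{0,1}(A) = \dim A$ and $h^0(A,T_A) = \dim A$. Combined with the group-scheme isomorphism $\pautz(X)\simeq\pautz(A)$, which forces $\dim\aut^\circ_X + \dim\pic^\circ_X = 2\dim X$ and forces the identity component to be proper, we deduce $\dim\aut^\circ_X \ge \dim X$. But $\aut^\circ_X$ acts on $X$, so $\dim\aut^\circ_X \le \dim X$ unless there are positive-dimensional stabilizers; properness of $\aut^\circ_X$ and a dimension count rule this out, giving a transitive action of a proper connected group with trivial stabilizer, i.e. $X$ is a torsor under an abelian variety, hence (having a rational point) an abelian variety.

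The main obstacle is the middle step: passing from "$\pautz(X)$ is an abelian variety of the right dimension" to "$\aut^\circ_X$ acts transitively on $X$." The semidirect product $\aut^\circ_X\ltimes\pic^\circ_X$ being an abelian variety does force $\aut^\circ_X$ to be proper and connected, but I must be careful that the isomorphism of Theorem \ref{theo1} is only as \emph{group schemes}, so a priori it need not respect the subgroup $\pic^\circ$ — I need to recover the decomposition intrinsically, e.g. by identifying $\pic^\circ_X$ as (the identity component of) the center, or as the unique maximal normal subgroup acting trivially on $X$, and then checking dimensions survive. Once $\aut^\circ_X$ is known to be a proper connected group of dimension $\ge \dim X$, the orbit argument is standard. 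I expect the characteristic-$0$ hypothesis to enter precisely here, guaranteeing reducedness of the relevant group schemes and the equality $\dim\aut^\circ_X = h^0(X,T_X)$, $\dim\pic^\circ_X = h^{0,1}(X)$.
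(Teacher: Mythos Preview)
Your overall architecture matches the paper's proof: use Theorem \ref{theo1} to see that $\pautz(X)$ is an abelian variety of dimension $2n$ (where $n=\dim X=\dim A$), deduce that $\aut^\circ_X$ is a proper connected group, and then run an orbit argument. However, two steps are not yet pinned down, and one of your worries is unnecessary.

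\emph{The dimension split.} Your Hochschild argument (or simply the isomorphism of Theorem \ref{theo1}) only yields $\dim\aut^\circ_X+\dim\pic^\circ_X=2n$; it does not by itself force $\dim\aut^\circ_X\ge n$, and the sentence ``$\dim\pic^\circ_X\ge\dim X$ forced by $X$ admitting a transitive-up-to-finite-index action'' is circular. The paper supplies the missing input: the order of the canonical bundle is a derived invariant, so $\omega_X\simeq\Orond_X$. Then $T_X\simeq\Omega_X^{n-1}$, and Serre duality together with Hodge symmetry (this is where characteristic $0$ enters) gives $\dim H^0(X,T_X)=\dim H^1(X,\Orond_X)$. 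Since in characteristic $0$ both group schemes are smooth, one concludes $\dim\aut^\circ_X=\dim\pic^\circ_X=n$.

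\emph{The orbit argument.} Once $\aut^\circ_X$ is a complete connected group of dimension $n$, you still need the lemma (Lemma \ref{action} in the paper) that an algebraic subgroup of $\aut(X)$ fixing a point is affine, hence finite if complete: this is proved by letting the group act on the finite-dimensional jet spaces $\Orond_x/\mathfrak{m}_x^{m+1}$. The stabilizers are then finite, so every orbit has dimension $n$ and equals $X$. Note the stabilizer need not be trivial; the conclusion is that $X$ is the quotient of the abelian variety $\aut^\circ_X$ by a finite subgroup, which is again an abelian variety.

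\emph{An unnecessary worry.} You do not need to recover the factors $\aut^\circ$ and $\pic^\circ$ intrinsically from the isomorphism of Theorem \ref{theo1}. All that is used is that $\pautz(X)$ is an abelian variety; since $\aut^\circ_X$ is a quotient of it (and $\pic^\circ_X$ a closed subgroup), both are automatically abelian varieties, regardless of whether the isomorphism with $\pautz(A)$ respects the semidirect product decomposition.
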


This theorem has been already proved in \cite{HuNi}, we give a new proof using
theorem \ref{theo1}. Actually our proof shows directly that $X$ is an abelian
subvariety of  $A\times
\hat{A}$. It is interesting to compare with \cite{OrAb}.
\begin{proof}
From theorem \ref{theo1}, $\aut^\circ_X\ltimes
\pic^\circ_X \simeq \aut^\circ_A \ltimes \pic^\circ_A$. It follows that 
$\aut^\circ_X$ and
$\pic^\circ_X$ are abelian varieties. The order of the canonical sheaf
and the dimension are derived invariants (see e.g. \cite[lemme 2.1]{BrMa}). So
$\omega_X$ is trivial and if  $n={\rm
dim}X={\rm dim} A$,  then ${\rm dim}(\aut^\circ_X\ltimes
\pic^\circ)=2n$. It is known that the tangent space of  $\aut^\circ_X$ is
$H^0(X,T_X)$ and the tangent space of  $\pic^\circ_X$ is $H^1(X,\Orond_X)$. As
$\omega_X$ is trivial an easy computation using the symetry of Hodge numbers
(Here we use the hypothesis that the characteristic of $k$ is $0$ ) and Serre
duality shows that  ${\rm dim}\ H^1(X,\Orond_X)={\rm dim}\ H^0(X,T_X)$. As
$\aut^\circ_X$ and $\pic^\circ_X$ are smooth, it follows that  ${\rm dim}
\aut^\circ_X={\rm dim}\pic^\circ_X=n$.
Let  $x$ be a point of  $X$. From lemma \ref{action}, the stabilizer of $x$
under the action $\aut^\circ_X$ is finite. Hence the orbit of $x$ is the whole
$X$. To conclude we use the well known fact that the quotient of an abelian
variety  $A$ by a finite subgroup  of $A$ is again an abelian variety.
\end{proof}

\begin{lemma}\label{action}
Let $X$ be a scheme and $G$ an algebraic subgroup of $\aut(X)$ which acts on
$X$ fixing a point $x$. Then  $G$ is affine. In particular if  $G$ is complete
then  $G$ is finite.
\end{lemma}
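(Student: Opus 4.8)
The plan is to exploit the existence of a fixed point together with the local structure of a group action at that point. First I would observe that since $G$ fixes $x$, there is an induced action of $G$ on every infinitesimal neighbourhood $\Spec(\Orond_{X,x}/\mathfrak{m}_x^{n+1})$ of $x$, and in particular a linear action of $G$ on the cotangent space $T^*_xX = \mathfrak{m}_x/\mathfrak{m}_x^2$, i.e.\ a homomorphism of algebraic groups $\rho\colon G \to GL(T^*_xX)$. The key point is that this representation, or rather the full collection of representations on the finite-dimensional jet spaces $\Orond_{X,x}/\mathfrak{m}_x^{n+1}$, is faithful in the limit: an automorphism of $X$ fixing $x$ and acting trivially on $\Orond_{X,x}/\mathfrak{m}_x^{n}$ for all $n$ acts trivially on the completion $\widehat{\Orond}_{X,x}$, hence on a neighbourhood of $x$, hence (since $X$ is, say, connected and reduced — or one argues on each component meeting the orbit) is the identity.

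Next I would make this into an actual closed immersion of group schemes. For each $n$ we get $\rho_n\colon G \to \Aut_k\big(\Orond_{X,x}/\mathfrak{m}_x^{n+1}\big)$, and the target is an affine algebraic group (a closed subgroup of $GL$ of the finite-dimensional $k$-vector space $\Orond_{X,x}/\mathfrak{m}_x^{n+1}$). The kernels $\ker(\rho_n)$ form a descending chain of closed subgroup schemes of $G$; since $G$ is noetherian this chain stabilises, say at $n=N$, and by the faithfulness discussion above the stable value $\ker(\rho_N) = \bigcap_n \ker(\rho_n)$ is trivial. Therefore $\rho_N\colon G \hookrightarrow \Aut_k\big(\Orond_{X,x}/\mathfrak{m}_x^{N+1}\big)$ is a closed immersion of $G$ into an affine algebraic group, so $G$ is affine. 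The final sentence is then immediate: if moreover $G$ is complete, then $G$ is a complete affine variety over $k$, hence finite (a connected complete affine variety is a point, and $G$ has finitely many components).

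The main obstacle I anticipate is the faithfulness step, i.e.\ verifying carefully that an element of $G$ acting trivially on all jet spaces at $x$ is the identity automorphism of $X$. This needs a mild hypothesis on $X$ (connectedness, or at least that the component containing $x$ is the only relevant one, together with reducedness so that an automorphism is determined by its behaviour on a dense open set): an automorphism fixing $x$ and trivial on $\widehat{\Orond}_{X,x}$ agrees with the identity on some open neighbourhood $U$ of $x$, and then equals the identity everywhere it is defined by the separatedness of $X$ and the fact that two morphisms agreeing on a dense open subset of a reduced scheme agree. One should also be slightly careful that $\rho_n$ is a morphism of schemes and not merely a map on points; this is standard, since the $G$-action on $X$ restricts to an action on the closed subscheme-neighbourhoods, which are preserved because they are defined by powers of the ideal of the fixed point $x$.
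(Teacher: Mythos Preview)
Your proof is correct and follows essentially the same approach as the paper's: both consider the descending chain of kernels of the representations of $G$ on the jet spaces $\Orond_{X,x}/\mathfrak{m}_x^{n+1}$, use noetherianity of $G$ to stabilise the chain, and conclude that $G$ embeds as a closed subgroup of some $GL(\Orond_{X,x}/\mathfrak{m}_x^{N+1})$. You are, if anything, more careful than the paper about the faithfulness step (the paper simply asserts that acting trivially on all $\Orond_{X,x}/\mathfrak{m}_x^{n+1}$ forces $g=\mathrm{id}$, without discussing the passage from the local ring to all of $X$).
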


\begin{proof}
We have a dual action of  $G$ on  $\Orond_x$ and so on the  $k$-vectorial space
$\Orond_x/\mathfrak{m}_x^{n+1}:=V_n$ ($\Orond_x$ is the local ring at $x$ and
$\mathfrak{m}_x$, is the maximal ideal of this ring.). Let $G_n:=\ker(G \to
GL(V_n))$. We have $\cdots \subset G_n \subset  G_{n-1} \subset \cdots \subset
G_0=G$. More over if $g \in \bigcap_n G_n$ then $g$ is the identity on
$\Orond_x$ because  $\Orond_x$ is separated, $\bigcap_n\mathfrak{m}_{n+1}=0$.
Hence
$\bigcap G_n=1$. Now $G$ being algebraic this sequence is stationnary, hence
there exist $n_0$ such that  $G_{n_0}=1$ and  $G$ is a closed subgroup of
$GL(\Orond_x/\mathfrak{m}_x^{n_0+1})$.
\end{proof}

\subsection{Derived equivalence classes of algebraic varieties}

Let's say that two smooth projective varieties $X$ and $Y$ over a field $k$ are
derived equivalent if there exist an equivalence of triangulated categories
$D^b(X)\simeq D^b(Y)$. In \cite{Ka} Kawamata conjecture the following:

\begin{conj}[conjecture 1.5, \cite{Ka}]\label{finitness}
There are up to isomorphism finitely many smooth projective varieties derived
equivalent to a given one.
\end{conj}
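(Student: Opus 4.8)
The plan is not to attack the conjecture head-on — this is precisely the point where the methods developed above run out — but to transport it into a statement about the moduli space $\perf_X$ and the structure of $\daut_X$ supplied by Theorem \ref{principal}. First, recall that if $Y$ is derived equivalent to $X$ then by \cite{OrK3} the equivalence is a Fourier--Mukai transform $\phi^{\Frond^\cdot}$, and applying its inverse to the structure sheaves $\Orond_y$, $y\in Y$, yields a flat family over $Y$ of perfect, simple, universally gluable complexes on $X$. The classifying map of this family embeds $Y$ as a locally closed subscheme of $\perf_X$, so the isomorphism class of $Y$ is pinned down by the connected component of $\perf_X$ into which $Y$ maps. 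Thus the conjecture becomes the assertion that, among the countably many connected components of $\perf_X$, only finitely many carry such a ``point-like'' geometric family producing an equivalence of derived categories.

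Second, I would cut down this countable list of relevant components using invariants that are already under control. The complexes $\Phi^{-1}(\Orond_y)$ all carry the same class in the numerical Grothendieck group of $X$ up to the action of the (countable) component group $G$ of $\daut_X$; and by Theorem \ref{theo1} the pair $\aut^\circ_Y\ltimes\pic^\circ_Y$ is forced to be isomorphic to $\aut^\circ_X\ltimes\pic^\circ_X$, so the dimension of the connected families of point objects through a given one is bounded independently of $Y$. Combining this with the known facts that derived equivalent varieties share dimension, canonical ring and the order of $\omega$, all candidate $Y$ are confined to a family of bounded numerical type.

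The main obstacle is precisely the passage from ``countably many components, each of bounded numerical type'' to ``finitely many isomorphism classes of $Y$'': one must exclude an infinite sequence of pairwise non-isomorphic Fourier--Mukai partners sharing the same invariants, and no general argument for this is known. It holds when $\omega_X$ is ample or anti-ample by Bondal--Orlov (where $\perf_X$ has essentially one relevant component, namely $X$ itself), and for abelian and $K3$ surfaces it reduces to the finiteness of the genus of an integral quadratic lattice; for general $X$ it remains open. A plausible continuation would be to produce, for each candidate $Y$, a polarization on $X\times Y$ for which the kernel $\Frond^\cdot$ has Hilbert polynomial bounded uniformly in $Y$, so that all kernels lie in a single quasi-projective moduli space; securing such polarizations uniformly in $Y$ appears to be the genuine content of Kawamata's conjecture, and I do not expect the present circle of ideas to deliver it.
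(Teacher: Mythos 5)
The statement you are asked to prove is Kawamata's Conjecture \ref{finitness}, which the paper does not prove and which remains open; the paper's actual contributions around it are Theorem \ref{countably} (at most countably many Fourier--Mukai partners, over any algebraically closed field) and an equivalent reformulation, Conjecture \ref{finitness2}, phrased as a uniform cohomological boundedness condition on the kernels $\Krond_i^\cdot$ of the transforms $F_i^*((\cdot)\otimes\Arond_i)$, verified against To\"en--Vaqui\'e's boundedness criterion (Proposition \ref{qcompact2}). Your proposal is candid that it does not close the conjecture, so there is no false step to flag; but you should be aware that it also does not go beyond what the paper already establishes. Your first two paragraphs reprove, by a slightly different route, exactly the countability statement: you embed each partner $Y$ into $\perf_X$ via the family of point objects $\Phi^{-1}(\Orond_y)$ and argue that the component of $\perf_X$ determines $Y$, whereas the paper pins $Y$ down by the connected component of $\daut_X$ containing $F^*((\cdot)\otimes\Arond_Y)$, using Favero's Lemma \ref{ampleiso} together with Theorem \ref{principal} (Lemma \ref{componentiso}). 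Both arguments land on ``countably many,'' and neither gives finiteness; note also that your $\perf_X$ variant needs a justification that the component (rather than the locally closed image itself) recovers $Y$ up to isomorphism, which is precisely what Favero-type rigidity supplies in the paper's version.

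The ``genuine content'' you identify at the end --- producing polarizations for which all kernels have uniformly bounded Hilbert polynomial, so that they live in one bounded family --- is essentially the paper's Conjecture \ref{finitness2}: the condition ${\rm dim}\,\HH^j(X,\Krond_i^\cdot\otimes\Lrond^l)\leq\nu(j)$ for $l\in[0,d]$ is exactly the uniform boundedness of the family $\{\Krond_i^\cdot\}$ in the sense of Proposition \ref{qcompact2}, and the paper shows this boundedness implies finiteness of the relevant components via Lemma \ref{componentiso}. So the missing idea in your proposal --- how to secure such uniform bounds for all partners simultaneously --- is not an oversight on your part but the open heart of the conjecture; the honest conclusion is that neither your outline nor the paper proves Conjecture \ref{finitness}, and your sketch should be read as a (correct) reduction parallel to the paper's, not as a proof.
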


It has been proven in \cite{AnTo} that there are at most countably many derived
equivalence classes when $k=\C$.
We give here a new proof of a slighly better result, namely the only assumption
on the field is that it is algebraically closed.

\begin{theorem}\label{countably}
Let $X$ be a smooth projective variety over a field $k$ algebraically closed.
Then there are at most countably many (up to isomorphism) smooth projective
varieties derived equivalent to $X$
\end{theorem}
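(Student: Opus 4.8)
The plan is to reduce the statement to the already-established countability facts about the group $\daut_X$. The key point is a standard observation in the theory of Fourier--Mukai transforms: if $X$ and $Y$ are derived equivalent, any equivalence $D^b(X)\simeq D^b(Y)$ is, by Orlov's representability theorem (\cite{OrK3}), given by a Fourier--Mukai kernel $\Frond^\cdot\in D^b(X\times Y)$, and such a kernel is a perfect, simple, universally gluable complex. Hence every variety $Y$ derived equivalent to $X$ gives rise to a $k$-point of the moduli space $\perf_{X\times X}$ of such complexes on $X\times X$ --- more precisely, fixing once and for all an equivalence $\Phi_0:D^b(X)\simeq D^b(Y)$ with kernel $\Grond^\cdot$ and its inverse kernel $\Frond^\cdot$, the composite $\Grond^\cdot*\Frond^\cdot$ (or rather the object recording the equivalence $D^b(X)\to D^b(X)$ obtained by transporting through $\Phi_0$, twisted suitably) lands in $\daut_X$. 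What we really need is: there are at most countably many isomorphism classes of $Y$, and this should follow because $\daut_X$ itself has at most countably many connected components (theorem \ref{principal}), and each connected component contributes only countably much new information about $Y$.

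First I would make precise how to attach to a derived equivalent $Y$ a point of a scheme built from $X$ alone. Given $Y$ with an equivalence $\psi:D^b(X)\to D^b(Y)$, the kernel $\Frond^\cdot_\psi\in D^b(Y\times X)$ of $\psi^{-1}$ can be viewed, after choosing $\psi$, as living over $X$; but different $Y$ live over different products. The cleaner route is: the structure sheaf $\Orond_Y$ of the diagonal, pushed through $\psi$ on each factor, i.e. the object $(\psi\boxtimes\psi)(\Orond_{\Delta_X})$, is a perfect simple universally gluable complex on $Y\times Y$ --- but again this is on $Y$, not $X$. So instead I would argue contravariantly: $Y$ is recovered from $D^b(Y)$, and $D^b(Y)\simeq D^b(X)$, so $Y$ is recovered (up to iso) from the data of which objects of $D^b(X)$ correspond to point-sheaves and structure sheaves of $Y$. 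More efficiently still, I would use that the set of equivalences $D^b(X)\simeq D^b(Y)$ for all $Y$ is naturally a torsor-like collection over the groupoid whose automorphism group at $X$ is precisely the $k$-points of $\daut_X$: two equivalences $\psi_1:D^b(X)\to D^b(Y_1)$ and $\psi_2:D^b(X)\to D^b(Y_2)$ with $Y_1\cong Y_2$ differ by a $k$-point of $\daut_{Y_1}\cong\daut_X$. Hence the isomorphism classes of derived-equivalent $Y$ inject into the set of connected components of a moduli space of Fourier--Mukai kernels on $X\times X$, modulo the action of $\daut_X(k)$ --- or, more directly, inject into $\pi_0(\perf_{X\times X})$ restricted to the locus of invertible kernels, which is an open subscheme of a locally finite type algebraic space.

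The decisive input is then the countability of connected components. By theorem \ref{principal} (quoting \cite[corollaries 3.31 and 3.32]{ToVa}) the group scheme $\daut_X$ has at most countably many connected components, each being a homogeneous space under $\pautz_X$, which is of finite type. A locally finite type algebraic space over $k$ that is a countable union of finite-type pieces has only countably many $k$-points up to the equivalence of lying in the same fibre of an algebraic family parametrised by a finite-type base --- and since $X$ is rigid as a point of its own moduli (deformations of $X$ in a connected family stay isomorphic only on a countable-to-one locus is false in general, but what we need is weaker): each connected component of the relevant moduli space of kernels parametrises a connected family of Fourier--Mukai partners, and by a result of the type ``Fourier--Mukai partners in a connected family form a set with at most countably many isomorphism classes'' --- in fact, since each such component is of finite type and the isomorphism class of $Y$ is locally constant on it when $Y$ varies in a smooth projective family with bounded invariants (using that $\aut_Y$, hence the Hilbert scheme of $Y\times Y$, is of finite type) --- one gets finitely many $Y$ per component, hence countably many in total. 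I expect the main obstacle to be precisely this last bookkeeping step: making rigorous that a connected finite-type family of smooth projective varieties, each derived equivalent to $X$, contains only finitely many (equivalently, is actually constant up to isomorphism, by a standard rigidity argument since $\pic^\circ$ and $\aut^\circ$ control deformations and these are derived invariants by theorem \ref{theo1}). Once that is in place, summing over the countably many components of $\daut_X$ finishes the proof.

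\begin{proof}
By \cite[theorem 2.2]{OrK3} every equivalence $D^b(X)\simeq D^b(Y)$ is a Fourier--Mukai transform with perfect kernel; such a kernel is simple and universally gluable, so it defines a $k$-point of the algebraic space $\perf_{X\times X}$ after transporting the trivial equivalence of $D^b(X)$ through a chosen equivalence with $D^b(Y)$, and this point in fact lies in the open subspace $\daut_X\subset\perf_{X\times X}$. Fixing for each isomorphism class of Fourier--Mukai partner $Y$ one equivalence $\psi_Y:D^b(X)\xrightarrow{\sim}D^b(Y)$, any other equivalence $D^b(X)\to D^b(Y)$ differs from $\psi_Y$ by composition with an element of $\Aut(D^b(X))=\daut_X(k)$. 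Consequently the set of isomorphism classes of Fourier--Mukai partners of $X$ is a quotient of a subset of $\pi_0(\daut_X)$: partners corresponding to equivalences lying in the same connected component of $\daut_X$ fit into a connected family, parametrised by a finite-type $k$-scheme (a connected component of $\daut_X$ is a homogeneous space under the finite-type group $\pautz_X$). In such a connected family all members have the same numerical invariants and, by theorem \ref{theo1}, isomorphic $\pic^\circ$ and $\aut^\circ$; the standard rigidity argument (deformations of $Y$ are controlled by $H^1(Y,T_Y)$ and the geometry of the Hilbert scheme of $Y\times Y$, which is of finite type) then shows that each connected component contributes only finitely many isomorphism classes. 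Finally, by theorem \ref{principal} (via \cite[corollaries 3.31 and 3.32]{ToVa}) the group $\daut_X$ has at most countably many connected components. Summing the finite contributions over these countably many components, we conclude that $X$ has at most countably many Fourier--Mukai partners up to isomorphism, which by Orlov's theorem is the same as the set of smooth projective varieties derived equivalent to $X$.
\end{proof}
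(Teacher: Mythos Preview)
Your argument has a genuine gap at its core: you never actually construct a well-defined map from the set of Fourier--Mukai partners of $X$ into $\pi_0(\daut_X)$. The kernel of an equivalence $\psi_Y:D^b(X)\to D^b(Y)$ lives on $X\times Y$, not on $X\times X$, so it is not a point of $\perf_{X\times X}$ or of $\daut_X$. Transporting the identity of $D^b(X)$ (or of $D^b(Y)$) through $\psi_Y$ just returns the identity, which lies in the neutral component for every $Y$ and therefore distinguishes nothing. The torsor observation---that two equivalences $D^b(X)\to D^b(Y)$ differ by an element of $\daut_X(k)$---is correct but does not produce the injection you claim; a torsor has no canonical basepoint. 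Consequently the sentence beginning ``Consequently the set of isomorphism classes\ldots'' is a non sequitur, and the subsequent appeal to a ``standard rigidity argument'' (that a connected finite-type family of derived-equivalent varieties contains finitely many isomorphism classes) is neither standard nor proved; in fact it is precisely the hard content you are trying to establish.

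The paper supplies the missing idea. Rather than transporting the identity, one transports the autoequivalence $(\,\cdot\,)\otimes\Arond_Y$ for a chosen \emph{ample} line bundle $\Arond_Y$ on $Y$: via a fixed equivalence $F_Y:D^b(X)\to D^b(Y)$ this gives an element $F_Y^*((\,\cdot\,)\otimes\Arond_Y)\in\daut_X(k)$, and one records its connected component. The reason this map is injective on isomorphism classes is Favero's lemma (lemma~\ref{ampleiso}): if $F^*((\,\cdot\,)\otimes\Arond)$ lands in $\paut$ up to shift, then $F$ itself is, up to shift and line bundle twist, induced by an isomorphism of varieties. Combined with the fact that $\paut$ is open in $\daut$ (so the connected component of $(\,\cdot\,)\otimes\Arond_Y$ consists entirely of elements of the form $(\sigma,\Lrond)$), this yields lemma~\ref{componentiso}, and hence that the number of partners is bounded by $|\pi_0(\daut_X)|$, which is countable by theorem~\ref{principal}. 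Your proof would be repaired by inserting exactly this construction and invoking Favero's lemma; the ``rigidity'' detour can then be discarded entirely.
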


Before we proceed to the proof we need a lemma due to Favero in \cite{Fa}.
Suppose we have $X$ and $Y$ derived equivalent. Fix an equivalence $F:D^b(X)\to
D^b(Y)$.
Then has we have seen in the proof of \ref{theo1}, $F$ induces an isomorphism
$F^*:\daut_Y \to \daut_X, G \mapsto F^{-1}\circ G \circ F$.

\begin{lemma}[lemma 4.1 \cite{Fa}]\label{ampleiso}
Let $X$ and $Y$ be a smooth projective varieties.  Let $\mathcal A$ be an ample
line bundle on $Y$, $\tau \in \emph{Aut}(X)$, and $\mathcal L \in \emph{Pic }X$
and suppose we have an equivalence $F: D^b(X) \simeq D^b(Y)$ and $F^*((\bullet
\otimes \mathcal A)) = (\tau, \mathcal L)[r]$ for some $r \in \Z$.  Then $F
\cong (\gamma, \mathcal N) [s]$ for some line bundle $\mathcal N \in
\emph{Pic}(Y)$ , an isomorphism $\gamma: X \tilde{\longrightarrow} Y$, and $s
\in \Z$.
\end{lemma}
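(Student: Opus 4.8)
The plan is to show that $F$ carries skyscraper sheaves to shifts of skyscraper sheaves, for then the classical Bondal--Orlov reconstruction argument (as in \cite{BoOr}) forces $F$ to be standard. Write $S:=F^{*}(\bullet\otimes\mathcal A)=F^{-1}\circ(\bullet\otimes\mathcal A)\circ F$, so that by hypothesis $S=(\tau,\mathcal L)[r]$, and set $P_x:=F(k(x))$ for each closed point $x\in X$. Since $k(x)\otimes\mathcal L\cong k(x)$ and $(\tau,\mathcal L)$ acts on skyscrapers through its underlying automorphism, evaluating the identity $(\bullet\otimes\mathcal A)\circ F=F\circ S$ on $k(x)$ gives $P_x\otimes\mathcal A\cong P_{\sigma(x)}[r]$ for a fixed automorphism $\sigma$ of $X$ (namely $\sigma=\tau^{\pm1}$), hence $P_x\otimes\mathcal A^{m}\cong P_{\sigma^{m}(x)}[mr]$ for all $m$. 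Because $F$ is an equivalence, $\Hom^{\bullet}(P_x,P_{x'})\cong\Hom^{\bullet}(k(x),k(x'))$, so the $P_x$ are pairwise orthogonal and each satisfies $\Hom^{<0}(P_x,P_x)=0$, $\Hom^{0}(P_x,P_x)=k$.

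The heart of the matter, and the step where ampleness is indispensable, is to prove that every $P_x$ has $0$-dimensional support. First, orthogonality bounds the Hom-groups: $\Hom(P_x,P_x\otimes\mathcal A^{m})\cong\Hom(k(x),k(\sigma^{m}x)[mr])$, which vanishes when $\sigma^{m}x\neq x$ and is otherwise a piece of $\bigwedge^{\bullet}T_xX$; in either case $\dim_k\Hom(P_x,P_x\otimes\mathcal A^{m})\le 2^{n}$ for all $m$. On the other hand I would compute this group for $m\gg0$ through the hypercohomology of $R\mathcal Hom(P_x,P_x)\otimes\mathcal A^{m}$: Serre vanishing kills every higher contribution, leaving $\Hom(P_x,P_x\otimes\mathcal A^{m})\cong H^{0}\!\big(\mathcal Ext^{0}(P_x,P_x)\otimes\mathcal A^{m}\big)$. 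The sheaf $\mathcal Ext^{0}(P_x,P_x)$ contains the identity endomorphism at every point of $\mathrm{supp}(P_x)$, so its support equals $\mathrm{supp}(P_x)$; if this had dimension $d\ge1$, its Hilbert polynomial relative to $\mathcal A$ would have degree $d$, forcing $\dim_k H^{0}(\mathcal Ext^{0}(P_x,P_x)\otimes\mathcal A^{m})\to\infty$. This contradicts the uniform bound $2^{n}$, so $\mathrm{supp}(P_x)$ is $0$-dimensional.

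Once the support is $0$-dimensional, the condition $\Hom^{0}(P_x,P_x)=k$ shows the support is a single point $y_x$ (a disjoint support would split off idempotents and make $\Hom^{0}$ at least two-dimensional), and then a short extremal-cohomology argument upgrades this to $P_x\cong k(y_x)[s_x]$: writing $a\le b$ for the bottom and top nonzero cohomology degrees, a nonzero sheaf map $\mathcal H^{b}(P_x)\to\mathcal H^{a}(P_x)$ (both are nonzero finite-length sheaves at $y_x$, so such a map exists through the residue field) produces a nonzero element of $\Hom^{a-b}(P_x,P_x)$; since $\Hom^{<0}=0$ this forces $a=b$, and then $\Hom^{0}=k$ forces the single surviving cohomology sheaf to be simple, i.e. $k(y_x)$. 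Thus $F$ sends each skyscraper to a shift of a skyscraper.

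It remains to package this geometrically. The assignment $x\mapsto y_x$ is a bijection onto the closed points of $Y$, and by the standard reconstruction argument for Fourier--Mukai equivalences that send points to points (\cite{BoOr}; the kernel of $F$ is then forced to be the push-forward of a line bundle along the graph of an isomorphism, suitably shifted) the shift $s_x$ is constant, $x\mapsto y_x$ is induced by an isomorphism $\gamma\colon X\to Y$, and $F\cong(\gamma,\mathcal N)[s]$ for some $\mathcal N\in\pic(Y)$ and $s\in\Z$. I expect the only delicate point to be the support computation of the middle paragraph: one must check that the Serre-vanishing reduction isolates exactly $\mathcal Ext^{0}$ and that its support is all of $\mathrm{supp}(P_x)$, since it is precisely the interplay between the ample twist (geometric growth of $h^{0}$) and the orthogonality bound (arithmetic boundedness) that rules out the exotic point-objects, such as spherical objects on Calabi--Yau varieties, which obstruct reconstruction in the absence of an ample hypothesis.
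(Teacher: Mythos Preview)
The paper does not give its own proof of this lemma: it is quoted from Favero \cite{Fa} and used as a black box. Your argument is correct and is exactly the strategy behind Favero's proof (and, ultimately, Bondal--Orlov reconstruction): one shows the objects $P_x=F(k(x))$ are point-like by playing the uniform bound $\dim\Hom(P_x,P_x\otimes\mathcal A^{m})\le 2^{n}$, obtained from orthogonality on the $X$-side, against the polynomial growth of $h^{0}(\mathcal Ext^{0}(P_x,P_x)\otimes\mathcal A^{m})$ forced by ampleness on the $Y$-side, and then concludes via the standard kernel analysis.

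The point you flagged as delicate is fine: for a perfect complex $P$ one has $(\mathcal H^{0}R\mathcal Hom(P,P))_y=\Hom_{D(\mathcal O_{Y,y})}(P_y,P_y)$, and this contains the identity of $P_y$, which is nonzero precisely when $y\in\mathrm{supp}(P)$; hence $\mathrm{supp}\,\mathcal Ext^{0}(P_x,P_x)=\mathrm{supp}\,P_x$. The Serre-vanishing step isolating $H^{0}$ of $\mathcal Ext^{0}$ is likewise standard once one applies it to the finitely many cohomology sheaves of $R\mathcal Hom(P_x,P_x)$. So there is nothing to compare against in the paper, and nothing to correct in your write-up.
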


Combining the above lemma with theorem \ref{principal} we get:

\begin{lemma}\label{componentiso}
Let $X$, $Y$ and $Z$ be smooth projective varieties. Let $\Arond_Y$ and
$\Arond_Z$ be ample line bundle on $Y$ and $Z$ respectively. Let $C_{\Arond_Y}$
(resp  $C_{\Arond_Z}$) be the connected  component of $\daut_Y$ (resp
$(\daut_Z$) that contains $(()\bullet \otimes \Arond_Y)$ (resp  $(()\bullet
\otimes \Arond_Y)$). Let $G:D^b(X) \simeq D^b(Y)$ and $H:D^b(X) \simeq D^b(Z)$
be derived equivalence. Finally suppose $$G^*(C_{\Arond_Y})=H^*(C_{\Arond_Z}).$$
Then $Y$ and $Z$ are isomorphic.
\end{lemma}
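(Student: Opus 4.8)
The plan is to transport the hypothesis $G^*(C_{\Arond_Y}) = H^*(C_{\Arond_Z})$ into a single copy of $\daut_X$ and then feed it into Favero's Lemma \ref{ampleiso}. First I would set $K = H^{-1} \circ G : D^b(Y) \simeq D^b(Z)$, so that $K^* = G^* \circ (H^*)^{-1} : \daut_Z \to \daut_Y$ is an isomorphism of locally algebraic groups, and the hypothesis reads $C_{\Arond_Y} = K^*(C_{\Arond_Z})$. The point is that the component $C_{\Arond_Z}$ is a coset of the neutral component $\daut_Z^\circ$, which by Theorem \ref{principal} equals $\pautz(Z) = \aut^\circ_Z \ltimes \pic^\circ_Z$; similarly $C_{\Arond_Y}$ is a coset of $\pautz(Y)$. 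Since $K^*$ is a group isomorphism it carries $\daut_Z^\circ$ isomorphically onto $\daut_Y^\circ$, and it carries the distinguished point $(\bullet \otimes \Arond_Z) \in C_{\Arond_Z}$ to some point of $C_{\Arond_Y}$.

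Next I would extract the data Favero's lemma needs. Evaluating at $k$-points, $K^*(\bullet \otimes \Arond_Z)$ is an autoequivalence of $D^b(Y)$ lying in the same connected component of $\daut_Y$ as $(\bullet \otimes \Arond_Y)$, hence in the coset $(\bullet \otimes \Arond_Y) \cdot \pautz(Y)$. Therefore $K^*(\bullet \otimes \Arond_Z) \cong (\bullet \otimes \Arond_Y) \circ (\tau, \Lrond)[r]$ for some $\tau \in \aut^\circ_Y$, some $\Lrond \in \pic^\circ_Y$ and some $r \in \Z$ (the shift appears because components of $\daut$ are indexed allowing for shifts; concretely the neutral component is a coset inside $\aut \ltimes \pic$ up to the $\Z$ of shifts, so I should be slightly careful and state the membership as: $K^*(\bullet \otimes \Arond_Z)$ differs from $\bullet\otimes\Arond_Y$ by an element of $\pautz(Y)$ composed with a shift). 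Composing, this says $K^*(\bullet \otimes \Arond_Z) \cong (\tau', \Lrond')[r]$ for the automorphism $\tau'$ given by $\tau$ followed by the pullback twist coming from $\Arond_Y$ and an appropriate line bundle $\Lrond'$ — the key structural fact being that an autoequivalence of $D^b(Y)$ lying in $(\bullet \otimes \Arond_Y)\cdot\pautz(Y)$ is of the form $(\gamma, \mathcal{M})[r]$ for an automorphism $\gamma$ of $Y$. Equivalently, unwinding $(K^{-1})^* = (K^*)^{-1} : \daut_Y \to \daut_Z$, the equivalence $K$ satisfies $(K^{-1})^*(\bullet \otimes \Arond_Y) \in C_{\Arond_Z}$, and again by Theorem \ref{principal} every element of $C_{\Arond_Z}$ is of the form $(\sigma, \mathcal N)[s]$ with $\sigma \in \aut(Z)$.

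Now apply Lemma \ref{ampleiso} with the roles $X \rightsquigarrow Z$, $Y \rightsquigarrow Y$, equivalence $F = K^{-1} : D^b(Z) \to D^b(Y)$... — here I must line up the variances correctly. The cleanest route: regard the equivalence $K : D^b(Y) \to D^b(Z)$ and note $K^*((\bullet \otimes \Arond_Z)) = (K^{-1}) \circ (\bullet\otimes\Arond_Z) \circ K$ is, by the previous paragraph, of the form $(\tau, \Lrond)[r]$ with $\tau \in \Aut(Y)$. This is exactly the hypothesis of Lemma \ref{ampleiso} applied to $F = K : D^b(Y) \simeq D^b(Z)$, $\mathcal A = \Arond_Z$ ample on $Z$. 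The conclusion of that lemma is then that $K \cong (\gamma, \mathcal N)[s]$ for an isomorphism $\gamma : Y \xrightarrow{\sim} Z$, some $\mathcal N \in \pic(Z)$, and $s \in \Z$. In particular $Y \cong Z$, which is the assertion.

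The main obstacle I anticipate is purely bookkeeping: making sure the component $C_{\Arond_Z}$ really is a single coset of $\pautz(Z)$ so that Theorem \ref{principal} forces each of its points to be of the form $(\sigma, \mathcal N)[s]$ with $\sigma$ an \emph{automorphism} (not merely an autoequivalence), and keeping the direction of $G^*$, $H^*$, and the induced isomorphism straight so that the ample bundle and the twist end up on the correct variety when I invoke Lemma \ref{ampleiso}. Once the hypothesis is rephrased as "$K^*(\bullet\otimes\Arond_Z)$ is a shifted automorphism-twist on $Y$," the conclusion is immediate from Favero's lemma, so there is no genuinely hard analytic or geometric step — the content is entirely in combining Theorem \ref{principal} (identification of the neutral component, hence of each connected component as its coset) with Lemma \ref{ampleiso}.
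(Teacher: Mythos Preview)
Your plan matches the paper's proof: pass to the composite equivalence $F=H\circ G^{-1}:D^b(Y)\to D^b(Z)$ (your $K=H^{-1}\circ G$ is a slip, since those functors do not compose in that order), observe via Theorem \ref{principal} that $F^*(\bullet\otimes\Arond_Z)\in C_{\Arond_Y}$ must be of the form $(\sigma,\Lrond)$ with $\sigma\in\aut(Y)$, and then invoke Lemma \ref{ampleiso}. The shift $[r]$ you worry about is not needed, since Theorem \ref{principal} gives $C_{\Arond_Y}=(\mathrm{id},\Arond_Y)\cdot\pautz(Y)\subset\paut(Y)$ with no shift factor.
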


\begin{proof}
The hypothesis imply that $F=H\circ G^-1:D^b(Y) \to D^b(Z)$ is an equivalence
and that $F^*(C_{\Arond_Z})=C_{\Arond_Y}$. Now theorem \ref{principal} implies
that any element in $C_{\Arond_Y}$ is a $(\sigma,\Lrond)$, for some automorphism
$\sigma$ of $Y$ and some line bundle $\Lrond$ on $Y$. Applying lemma
\ref{ampleiso} the conclusion follows.
\end{proof}

\begin{corollary}
The number of smooth projective varieties derived equivalent to $X$ is bounded
by the number of connected components of $\daut_X$.
\end{corollary}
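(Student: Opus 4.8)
The plan is to show that the map sending a derived equivalence class representative to a connected component of $\daut_X$ is well-defined and injective, which immediately bounds the number of isomorphism classes by the number of components. First I would fix once and for all an ample line bundle $\Arond_X$ on $X$, so that $(\bullet \otimes \Arond_X)$ is a distinguished $k$-point of $\daut_X$ lying in some component. Given any smooth projective $Y$ derived equivalent to $X$, choose an equivalence $G:D^b(X)\simeq D^b(Y)$ and an ample line bundle $\Arond_Y$ on $Y$; as in the proof of Theorem~\ref{theo1} this induces an isomorphism of group schemes $G^*:\daut_Y\to\daut_X$, which in particular permutes the connected components. To the pair $Y$ (together with its chosen data) I associate the component $G^*(C_{\Arond_Y})$ of $\daut_X$, where $C_{\Arond_Y}$ is the component of $\daut_Y$ containing $(\bullet\otimes\Arond_Y)$.

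The key point is that this assignment descends to a well-defined injection on isomorphism classes, and this is exactly the content of Lemma~\ref{componentiso}. Indeed, suppose $Y$ and $Z$ are both derived equivalent to $X$ with chosen equivalences $G:D^b(X)\simeq D^b(Y)$ and $H:D^b(X)\simeq D^b(Z)$ and ample bundles $\Arond_Y$, $\Arond_Z$, and suppose they are sent to the same component, i.e. $G^*(C_{\Arond_Y})=H^*(C_{\Arond_Z})$ in $\daut_X$. Then Lemma~\ref{componentiso} gives directly that $Y\cong Z$. Hence distinct isomorphism classes among the varieties derived equivalent to $X$ map to distinct connected components of $\daut_X$, so their number is at most the number of components of $\daut_X$.

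One subtlety worth spelling out is independence of the auxiliary choices: a priori the component $G^*(C_{\Arond_Y})$ might depend on the choice of equivalence $G$ or of ample bundle $\Arond_Y$, so strictly speaking the argument produces, for each isomorphism class, a nonempty \emph{set} of components rather than a single one. This causes no harm: by Lemma~\ref{componentiso} these sets are pairwise disjoint for non-isomorphic $Y$ and $Z$ (if a component were common to the set attached to $Y$ and the set attached to $Z$, the lemma would force $Y\cong Z$), so choosing one component from each set still yields an injection from isomorphism classes into the set of components of $\daut_X$. The main obstacle is therefore not a deep one — it is simply making sure the bookkeeping with the non-canonical choices is clean — since all the real work has already been done in Lemma~\ref{componentiso}, which in turn rests on Favero's Lemma~\ref{ampleiso} and the description of the neutral component in Theorem~\ref{principal}.
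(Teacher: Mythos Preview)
Your argument is correct and is exactly the natural deduction the paper has in mind: the corollary is stated without proof precisely because it follows immediately from Lemma~\ref{componentiso} via the injection you describe. The only superfluous step is fixing $\Arond_X$ at the outset, which you never use; otherwise your bookkeeping with the non-canonical choices is clean and the conclusion follows.
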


\begin{corollary}
Theorem \ref{countably} holds true.
\end{corollary}

\begin{proof}
By theorem \ref{principal}, there are countably many connected components in
$\daut_X$.
\end{proof}

\subsection{A new conjecture}

Using the previous discussion and a new ingredient we introduce a new
conjecture, we prove equivalent to conjecture \ref{finitness}. We begin with
some notations.
Let $(X_i)_{i\in \N}$ be the set of varieties derived equivalent to $X$. Let
$\Arond_i$ be an ample line bundle on $X_i$ and $F_i:D^b(X) \to D^b(X_i)$ be a
triangulated equivalence. Let $\Krond_i^\cdot\in D^b(X\times X)$ be the kernel
of the Fourier-Mukai transform $F_i^*(()\otimes \Arond_i)$. Let $\Lrond$ be a
very ample line bundle on $X\times X$. Finally let $d\in \N$ such that
$\Lrond\oplus \Lrond^{-1} \oplus \ldots \oplus \Lrond^{-d}$ is a strong 
generator of $D^b_{qc}(X\times X)$, recall from \cite{BoVa} that such a $d$
exist.  Then conjecture \ref{finitness} is equivalent to the following

\begin{conj}\label{finitness2}
We can choose the $\Arond_i$ and the $F_i$ such that there exist a function
$\nu:\Z \to \N$ with finite support such that \beq{\rm dim
}\,\HH^j(X,\Krond_i^\cdot \otimes \Lrond^l)\leq \nu(j)\ \forall j, \forall l\in
[0,d], \forall i \label{qcompact}\ee
\end{conj}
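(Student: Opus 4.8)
The plan is to deduce the existence asserted in Conjecture \ref{finitness2} from the finiteness of Conjecture \ref{finitness}; I treat only this implication, which is precisely what produces the data $\Arond_i$, $F_i$ and $\nu$, and not the converse. So assume Conjecture \ref{finitness}: up to isomorphism there are only finitely many smooth projective varieties derived equivalent to $X$. By Theorem \ref{countably} the varieties $(X_i)_{i\in\N}$ fall into only countably many isomorphism classes, and under \ref{finitness} into only finitely many, say with representatives among $X_{i_1},\dots,X_{i_N}$. The idea is to exploit the freedom in the choice of $\Arond_i$ and $F_i$ to make the attached Fourier--Mukai kernels depend only on the isomorphism class of $X_i$, so that only finitely many kernels occur as $i$ ranges over $\N$.

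First I would fix the discrete data class by class. For each isomorphism class I pick a representative, an ample line bundle on it (which exists by projectivity), and a triangulated equivalence from $D^b(X)$ (which exists since the representative is derived equivalent to $X$). For an arbitrary index $i$, I transport these along a chosen isomorphism $X_i\cong X_{i_r}$ with the representative of its class; this yields $\Arond_i$ and $F_i$ whose associated kernel coincides with that of the representative. Thus, as $i$ ranges over $\N$, at most $N$ distinct kernels $\Krond_i^\cdot$ occur.

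Next I would produce $\nu$. By Orlov's representability theorem \cite[theorem 2.2]{OrK3}, each autoequivalence $F_i^*((\bullet)\otimes\Arond_i)$ of $D^b(X)$ is a Fourier--Mukai transform, so $\Krond_i^\cdot$ is a bounded complex with coherent cohomology on the smooth projective $k$-scheme $X\times X$; hence so is $\Krond_i^\cdot\otimes\Lrond^l$ for every $l\in[0,d]$. As $X\times X$ is proper over $k$, each $\HH^j(X\times X,\Krond_i^\cdot\otimes\Lrond^l)$ is finite-dimensional over $k$ and vanishes for $j$ outside a finite range fixed by the amplitude of $\Krond_i^\cdot$ and the cohomological dimension of $X\times X$. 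Since only finitely many kernels and finitely many twists $\Lrond^l$ occur, I would set
\beq
\nu(j)=\max\bigl\{\dim_k\HH^j(X\times X,\Krond_i^\cdot\otimes\Lrond^l)\ :\ i\in\N,\ 0\le l\le d\bigr\},
\ee
a maximum over a finite set of finite numbers, so $\nu(j)\in\N$ for all $j$, the stated bound holds by construction, and $\nu$ has finite support because each of the finitely many complexes involved has finite cohomological amplitude.

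The only points requiring care are not estimates but structural: arranging the choices of $\Arond_i$ and $F_i$ so that the data genuinely factor through the finite set of isomorphism classes (so that the supremum defining $\nu$ is really a finite maximum), and invoking Orlov's theorem to know that each conjugated autoequivalence is a Fourier--Mukai transform with a bounded kernel, which is what makes the twisted hypercohomology finite-dimensional and bounded in $j$. Granting \ref{finitness}, no further input is needed, and this is the expected (mild) obstacle. By contrast, the substantive converse---recovering finiteness from the mere existence of such a uniform bound, using that $\Lrond\oplus\Lrond^{-1}\oplus\cdots\oplus\Lrond^{-d}$ strongly generates $D^b_{qc}(X\times X)$ in the sense of \cite{BoVa}---is the other half of the equivalence and is not established by the argument above.
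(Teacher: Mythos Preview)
Your argument for the implication \ref{finitness} $\Rightarrow$ \ref{finitness2} is correct and is essentially what the paper has in mind when it writes ``It's clear that assuming \ref{finitness}, conjecture \ref{finitness2} holds.'' You have simply spelled out why: choose the data class by class so that only finitely many kernels occur, then take a finite maximum. No objection there.

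However, the proof attached to this statement in the paper is a proof of the \emph{equivalence} of the two conjectures, and its content lies entirely in the converse direction, which you explicitly set aside. The paper's argument for \ref{finitness2} $\Rightarrow$ \ref{finitness} runs as follows: the uniform bound (\ref{qcompact}) says exactly that the family $\{\Krond_i^\cdot\}$ satisfies the hypothesis of Proposition \ref{qcompact2} (the boundedness criterion of To\"en--Vaqui\'e), hence this family is bounded, i.e.\ lives in a quasi-compact piece of $\perf_{X\times X}$; consequently the kernels $\Krond_i^\cdot$ meet only finitely many connected components of $\daut_X$, so the set $\{C_{\Arond_i}\}$ of components containing the $F_i^*((\bullet)\otimes\Arond_i)$ is finite; Lemma \ref{componentiso} then gives that the $X_i$ fall into finitely many isomorphism classes. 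Thus the key inputs you are missing for the substantive half are precisely Proposition \ref{qcompact2} (to pass from the cohomological bound to boundedness/quasi-compactness of the family of kernels) and Lemma \ref{componentiso} (to pass from finitely many components to finitely many isomorphism classes). Your write-up is not wrong, but it covers only the trivial half of what is being claimed.
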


\begin{proof}
It's clear that assuming \ref{finitness}, conjecture \ref{finitness2} holds.
Only the converse needs a proof.
So now assume \ref{finitness2} holds. Then from lemma \ref{componentiso} it's
enough to prove that there are only finitely many elements in
$\{C_{\Arond_i}\}$, the set of connected components of $\daut_X$ containing the 
$F_i^*(()\otimes \Arond_i)$. Now the condition (\ref{qcompact}) in conjonction
with proposition \ref{qcompact2} implies 

the family $\Krond_i$ is bounded hence quasi-compact.
\end{proof}

\begin{prop}[corollary 3.32 in \cite{ToVa}]\label{qcompact2}
Let $k$ be a field and $X$ be a smooth and projective
variety over $k$. Let $\mathcal{O}(1)$ be a very ample
line bundle on $X$. Then, there exists an integer $d$, such
that the following condition is satisfied:

A family of perfect complexes $\{E_{i}\}_{i\in I}$ on $X$ is bounded
if and only if there exists a function $\nu : \mathbb{Z} \longrightarrow
\mathbb{N}$
with finite support, such that
$$Dim_{k}\mathbb{H}^{k}(X,E_{i}(j)) \leq \nu(k) \qquad \forall k\; \forall
j\in [0,d] \; \forall i\in I.$$
\end{prop}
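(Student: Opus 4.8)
The plan is to prove the two implications separately, with the nontrivial content in the ``numerical bound $\Rightarrow$ bounded'' direction. Throughout, \emph{bounded} means that the family $\{E_i\}_{i\in I}$ factors through a quasi-compact open substack of the moduli stack $\underline{\perf}(X)$ of perfect complexes on $X$; equivalently, that there is an affine $k$-scheme $T$ of finite type and a perfect complex $\mathcal{E}\in\perf(X\times T)$ with every $E_i$ of the form $Li_t^\ast\mathcal{E}$ for a suitable $t\in T$. The easy direction I would dispatch with semicontinuity: given such $T$ and $\mathcal{E}$, for each $j$ the complex $C_j:=Rp_{T\ast}(\mathcal{E}\otimes^L p_X^\ast\mathcal{O}(j))$ is perfect on the noetherian scheme $T$ (cohomology and base change for the flat proper $p_T$), one has $\HH^k(X,E_i(j))=H^k(Li_t^\ast C_j)$, the function $t\mapsto\dim_k H^k(Li_t^\ast C_j)$ is upper semicontinuous hence bounded on $T$, and $C_j\in D^b$ so only finitely many $k$ matter; taking $\nu(k):=\max_{0\le j\le d}\sup_{t\in T}\dim_k H^k(Li_t^\ast C_j)$ works for \emph{any} choice of $d$.

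For the converse I would pass to a finite-dimensional algebra. By Bondal--Van den Bergh \cite{BoVa} there is a $d$ such that $G:=\mathcal{O}_X\oplus\mathcal{O}_X(-1)\oplus\cdots\oplus\mathcal{O}_X(-d)$ is a strong generator of $\perf(X)=D(X)$; I fix this $d$ (which is then the $d$ of the statement, the easy direction being insensitive to its value). In particular $G$ is a compact generator of $D_{qc}(X)$, so with $R:=\mathrm{RHom}_X^{\cdot}(G,G)$ --- a dg $k$-algebra with $\dim_k R<\infty$ --- the functor $\mathrm{RHom}_X^{\cdot}(G,-)$ is an equivalence $D_{qc}(X)\xrightarrow{\ \sim\ }D(R)$ that restricts to an equivalence $\perf(X)\xrightarrow{\ \sim\ }\perf(R)$ on compact objects. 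Since the underlying equivalence is of dg-enhanced categories, it induces (via the Toën--Vaquié formalism) an isomorphism of the associated moduli stacks of perfect objects, so a family on $X$ is bounded iff its image in $D(R)$ is; and because $\mathrm{RHom}_X^{\cdot}(G,E)=\bigoplus_{j=0}^d R\Gamma(X,E(j))$, the numerical hypothesis says precisely that $\dim_k\HH^k$ of the transported family is bounded by a function of finite support, uniformly in $i$.

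So it remains to prove the algebraic statement: for $R$ a finite-dimensional dg $k$-algebra and $\mu:\ZZ\to\N$ of finite support, the family of perfect dg $R$-modules $M$ with $\dim_k\HH^k(M)\le\mu(k)$ for all $k$ is bounded. Here I would use minimal models: represent each such $M$ by a minimal semifree (finite) complex $P^\bullet$ of finitely generated projective $R$-modules, whose differential has all matrix entries in the radical of $R$. The crucial role of the \emph{finite support} of $\mu$ appears now: a Nakayama-type computation at the extreme nonzero terms of $P^\bullet$ shows those terms contribute nonzero cohomology, so $P^\bullet$ is concentrated in one fixed window $[a,b]$ of degrees depending only on the support of $\mu$ and the amplitude of $R$; then a further Nakayama estimate, degree by degree, bounds the rank of each $P^k$ in terms of the values $\mu(\ell)$ and $\dim_k R$. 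Hence all the $P_i^\bullet$ are complexes of projective $R$-modules of bounded total rank living in a fixed bounded range of degrees, so they are parametrised by a $k$-scheme of finite type; the family is bounded. Pulling back through the equivalence finishes the proof.

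The main obstacle is exactly this last step, and within it the inference ``bounded cohomology $\Rightarrow$ bounded minimal model''. It genuinely fails without the finite-support hypothesis --- over $R=k[x]/x^2$ the perfect complexes $[\,R\xrightarrow{\,x\,}R\xrightarrow{\,x\,}\cdots\xrightarrow{\,x\,}R\,]$ are minimal with two-dimensional cohomology but unbounded amplitude --- so the argument must first use that hypothesis to lock the minimal models into a fixed degree window before any rank estimate is possible, and the dg bookkeeping (passing to the associated graded of $R$, controlling the amplitude of $R$ itself) lives here too. Granting the Bondal--Van den Bergh strong-generation theorem and the fact that a quasi-equivalence of dg categories of finite type induces an isomorphism of moduli stacks of perfect modules --- which is the substance of \cite{ToVa} --- everything else is routine manipulation of distinguished triangles and semicontinuity.
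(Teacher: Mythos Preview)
The paper gives no proof of this proposition: it is quoted verbatim as Corollary~3.32 of \cite{ToVa} and is followed only by a remark on admissible values of $d$. There is thus nothing in the present paper to compare your argument against; what you have written is a sketch of the Toën--Vaquié proof itself.

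Your overall architecture is the right one and matches theirs. The easy direction is semicontinuity, and for the hard direction one transports the problem along the compact generator $G=\bigoplus_{j=0}^{d}\Orond(-j)$ to perfect modules over the dg algebra $R=\rhomp(G,G)$, where the numerical hypothesis becomes a uniform bound with finite support on $\dim_k H^{\ast}$, and one then shows such a family is quasi-compact in the moduli of perfect $R$-modules. One correction is needed in your minimal-model step, however: the claim that \emph{both} extreme nonzero terms of a minimal complex contribute nonzero cohomology is false at the bottom. Already over the path algebra of the $A_2$ quiver the minimal complex $[P_2\hookrightarrow P_1]$ representing the simple $S_1$ has $P^0=P_2\neq 0$ but $H^0=0$; only the Nakayama argument at the \emph{top} degree (where $H^b(P)\twoheadrightarrow P^b/\mathrm{rad}\,P^b$) goes through unconditionally. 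The lower bound on the amplitude must come instead from the smoothness of $X$, which makes $R$ homologically smooth: the diagonal $R$-bimodule is perfect, and this uniformly bounds the Tor-amplitude of every perfect $R$-module, hence the length of its minimal semifree model. You gesture at this with the phrase ``and the amplitude of $R$'', but the sentence immediately before it is literally false and should be rewritten to invoke smoothness rather than a symmetric Nakayama argument. With that repair the outline is sound.
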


\begin{remark}
\begin{itemize}
 \item Any integer $d$ such that $\bigoplus_{i=0}^d \mathcal{O}(-i)$ is a
compact generator is good for proposition \ref{qcompact2}.
\item It follows from \cite{Ro2} that one can choose $d\leq 2{\rm dim}\, X$.
\end{itemize}
\end{remark}


\begin{thebibliography}{99}


\bibitem{ArAn} M. Artin, {\it Algebraization of formal moduli, I.},
 Global Analysis (Papers in Honor of K. Kodaira), (Univ. of Tokyo Press, Tokyo),
1971, 21–71

\bibitem{AnTo} M. Anel and B. To\"en,{\it On derived equivalence classes of
algebraic varieties.}, {\tt 	arXiv:math/0611545v3 [math.AG]}

\bibitem{Breq} T. Bridgeland,{\it Equivalences of Triangulated Categories and
Fourier-Mukai Transforms.},Bull. London Math. Soc. {\bf 31}, (1999) , 25-34.

\bibitem{BrMa}
T.~Bridgeland and A.~Maciocia, {\it   Complex surfaces with equivalent derived
  categories}, Math. Z. \textbf{236} (2001), no.~4, 677-697.

\bibitem{BoOr}
A.~Bondal and D.~O. Orlov, {\it  Semi-orthogonal decompositions for algebraic
  varieties.}, {\tt arXiv:math.AG/9506012}.

\bibitem{BoVa} A. Bondal and M. Van den Bergh, {\it Generators and
representability of functors in commutative and noncommutative geometry}, {\tt 
arXiv:math/0204218v2 [math.AG]}

\bibitem{Fa} D. Favero,{\it Some finiteness results for Fourier-Mukai
partners.}, {\tt 	arXiv:0712.0201v2 [math.AG]}

\bibitem{HuNi} D. Huybrechts, M. Nieper-Wisskirchen,{\it Remarks on derived
equivalences of Ricci-flat manifolds}, {\tt 	arXiv:0801.4747v1 [math.AG]}

\bibitem{In}
Michi-aki Inaba,  {\it Toward a definition of moduli of complexes of coherent
sheaves on a projective scheme.},
 J. Math. Kyoto Univ, 42(2):317-329, 2002.

\bibitem {Ka} Y. Kawamata,{\it D-equivalence and K-equivalence.} ,{\tt
arXiv:math/0205287v3 [math.AG]}

\bibitem{Li} M. Lieblich, {\it Moduli of complexes on a proper morphism.},
J. of Algebraic Geometry {\bf 15}, (2006), 175-206.

\bibitem{OrAb}
 D.~O. Orlov, {\it Derived categories of coherent sheaves on abelian varieties
and equivalences between them.}, Izv. Ross. Akad. Nauk Ser. Mat. \textbf{66}
  (2002), no.~3, 131-158.

\bibitem{OrK3}
D.~O. Orlov, {\it  Equivalences of derived categories and {$K3$} surfaces.}, J.
  Math. Sci. (New York) \textbf{84} (1997), no.~5, 1361-1381, Algebraic
  geometry, 7.

\bibitem{these} F. Rosay, {Sur quelques points de la th\'eorie des
d\'eformations d\'eriv\'ees}, thesis , {\tt
http://www-fourier.ujf-grenoble.fr/\~{}frosay/these.pdf}

\bibitem{Ro} R. Rouquier {\it Cat\'egories d\'eriv\'ees et g\'eom\'etrie
birationelle.},
S\'eminaire Bourbaki, march 2000 to appear in  Ast\'erisque.

\bibitem{Ro2} R. Rouquier {\it Dimensions of triangulated categories.},{\tt
arXiv:math/0310134v3 [math.CT]}



\bibitem{ToVa} Bertrand To{\"e}n and Michel Vaqui{\'e}, {\it Moduli of objects
in
dg-categories.}, {\tt arXiv:math.AG/0503269}

\end{thebibliography}
 \end{document}